\theoremstyle{definition}
\newtheorem{lemma}{Lemma}[section]
\newtheorem{theorem}{Theorem}[section]
\newtheorem{remark}{Remark}[section]
\newtheorem{example}{Example}[section]
\journal{Journal of \LaTeX\ Templates}
\begin{document}
	
	\begin{frontmatter}
		
		\title{An asymptotically compatible unfitted finite element methods for nonlocal elliptic Interfaces:  local limits and sharp error estimates \tnoteref{mytitlenote}
		}

\author[mymainaddress]{Haixia Dong}
\author[mymainaddress]{Ziqing Xie}
\author[mythirdaddress]{ Jiwei Zhang  \corref{mycorrespondingauthor}}
\cortext[mycorrespondingauthor]{Corresponding author}
\ead{jiweizhang@whu.edu.cn}
\address[mymainaddress]{MOE-LCSM, School of Mathematics and Statistics, Hunan Normal University, Changsha, Hunan 410081, P. R. China.}
\address[mythirdaddress]{Corresponding author. School of Mathematics and Statistics, and Hubei Key Laboratory of Computational Science, Wuhan University, Wuhan 430072, China}

\begin{abstract}
	This paper  presents the development and analysis of an asymptotically compatible (AC) unfitted finite element method for one-dimensional nonlocal elliptic interface problems. The proposed  method achieves optimal error estimates through three principal contributions: 
(i) an extended maximum principle, coupled with an asymptotic consistency analysis of the flux operator, which establishes second-order convergence of nonlocal solutions to their local counterparts in the maximum norm;
(ii) a Nitsche-type formulation that directly incorporates nonlocal jump conditions into the weak form, enabling high accuracy without body-fitted meshes; and 
(iii) a rigorous proof of optimal convergence rates in both the energy and $L^2$ norms via the nonlocal maximum principle, flux consistency, and a newly derived nonlocal Poincaré inequality.
Numerical experiments confirm the theoretical findings and demonstrate the robustness and efficiency of the proposed approach, thereby providing a foundation for extensions to higher dimensions.
\end{abstract}
		
\begin{keyword}
	nonlocal elliptic interface problems, 
	unfitted finite element methods, 
	asymptotically compatible schemes, 
	error estimates, 
	jump conditions	
\end{keyword}
		
\end{frontmatter}
	
	
\section{Introduction}	
	   Nonlocal models (NLMs) have become indispensable tools for simulating long-range interactions and multiscale material behavior, with  applications spanning  fracture mechanics  \cite{MR1727557,HA20111156}, damage evolution \cite{MR3283795,MR3344593}, and crack propagation \cite{Yang2005,MR3831320,MR4659794}.   Unlike classical local models, NLMs employ  integral formulations that naturally capture nonlocal effects and relax the stringent regularity requirements on field variables, making them particularly suitable for problems involving discontinuities and singularities. 
	   
	   Building on these advantages, this work focuses on one-dimensional nonlocal elliptic interface (NLI) problems, where solutions  exhibit discontinuities across interfaces.  Specifically,  we consider the domain $\Omega=(a,b)$ with an interface point $\Gamma_0 = \alpha$ that divides $\Omega$ into two subdomains $\Omega_1=(a, \alpha)$ and $\Omega_2= (\alpha, b)$. 
To characterize nonlocal interactions, we introduce two horizon parameters $\delta_1$ and $\delta_2$ with $\delta_1 \leq\delta_2$, and define the interaction intervals as follows
\begin{itemize}
\item  $\mathcal{I}_1^D = (a-\delta_1,a)$  and $\mathcal{I}_2^D = (b,b + \delta_2) $ for Dirichlet-type nonlocal interactions;
\item $\mathcal{I}_1^J = (\alpha,\alpha + \delta_1)$ and  $ \mathcal{I}_2^J = (\alpha - \delta_2,\alpha)$  for interface interactions.
\end{itemize}
The nonlocal interface  $\Gamma$ is defined as the union of the interaction zones: $\Gamma = \mathcal{I}_1^J\cup \mathcal{I}_2^J =  (\alpha - \delta_2,\alpha + \delta_1)$. The complete interaction domains are denoted by 
$\mathcal{I}_1= \mathcal{I}_1^D\cup\mathcal{I}_1^J$ for $\Omega_1$ and  $\mathcal{I}_2 =\mathcal{I}_2^J\cup \mathcal{I}_2^D$ for $\Omega_2$, with the nonlocal Dirichlet boundary given by $\mathcal{I}^D =  \mathcal{I}_1^D\cup \mathcal{I}_2^D.$
For scalar functions $u_i: \Omega_i\cup\mathcal{I}_i\rightarrow \mathbb{R} \;(i = 1,2)$, the nonlocal operators are defined as
	\begin{align*}
		\mathcal{L}_{i}u_i(x)&=\int_{ \Omega_i\cup\mathcal{I}_i}[u_i(x)-u_i(y)]\gamma_{\delta_i}(x, y)\mathrm{d}y, \quad  x\in\Omega_i,
	\end{align*}
	where  the kernel function \(\gamma_{\delta_i}(x, y)\) satisfies
 \begin{equation}
 \label{kernel_properties}
\gamma_{\delta_i}(x, y): \mathbb{R} \times \mathbb{R} \to \mathbb{R}^+,\quad\;
\gamma_{\delta_i}(x, y) = \gamma_{\delta_i}(y, x),\quad\;
\gamma_{\delta_i}(x, y) = 0, \, |x - y| > \delta_i > 0.
\end{equation}
We consider a rescaled kernel  $\gamma_{\delta_i}(s) = \delta_i^{-3} \gamma_i ( s/\delta_i ),$ where  $\gamma_i$ is a nonnegative, radially symmetric function that is locally integrable and compactly supported within $|s|< 1$, 
and satisfies the moment condition
	\begin{equation}
		\label{kernel_scaling}
		 \dfrac{1}{2}\int_{-1}^1 s^2 \gamma_i(s) \, \mathrm{d}s=  \sigma_i >0,  \;\quad i = 1, 2.
	\end{equation}
The nonlocal interface (NLI) problem is to find  solutions $u_i (\,i=1,2)$ satisfying 
	\begin{equation}
		\label{non-interface-eq}
		\begin{split}
			\mathcal{L}_iu_i &= f_i, \quad x\in\Omega_i,\qquad {\rm and }\qquad
			u_i = g_i, \quad x\in \mathcal{I}_i^D,
		\end{split}
	\end{equation}
	with interface conditions
	\begin{equation}
		\label{non-interface-jump}
		\begin{split}
			u_2-u_1 &= \varphi, \quad x\in\Gamma,\qquad {\rm and }\qquad
			\mathcal{F}(u_1,u_2) = \psi, \quad x\in \Gamma\cup\Omega_2^J.
		\end{split}
	\end{equation}	
Here, $\Omega_2^J = (\alpha+\delta_1, \alpha+\delta_2)$ represents an extended interaction zone, and the interface-flux operator $\mathcal{F}(u_1, u_2)$ is defined  piecewise as in  \cite{MR4602510} 
	\begin{equation}
		\label{flux}
		\mathcal{F}(u_1, u_2) = 
		\begin{cases} 
			\displaystyle
			\int_{\Omega_2^J} [u_2(x) - u_2(y)]\gamma_{\delta_2}(x,y)\mathrm{d}y\\[4pt]
			\displaystyle		\qquad+ \dfrac{1}{2}\int_{\mathcal{I}_1^J} [u_1(x) - u_1(y)](\gamma_{\delta_2}(x, y)-\gamma_{\delta_1}(x, y))\mathrm{d}y, & x\in\mathcal{I}_2^J, \\[6pt]
			\displaystyle
			\dfrac{1}{2}\int_{\mathcal{I}_2^J} [u_2(x) - u_2(y)](\gamma_{\delta_1}(x, y)-\gamma_{\delta_2}(x, y))\mathrm{d}y, & x\in\mathcal{I}_1^J. 
		\end{cases}
	\end{equation}
A central challenge in NLI problems is twofold: ensuring the nonlocal solution converges to the classical local solution as nonlocal effects vanish (i.e., $\delta_1, \delta_2\to 0$), and developing efficient numerical methods capable of handling interfacial discontinuities without compromising accuracy or computational efficiency.

This challenge  raises a fundamentall question in nonlocal modeling: whether nonlocal interface equations indeed converge to a classical local interface problem as the nonlocal effect vanishes.  This limiting behavior, known as the {\em local limit}, is essential for practical applications of nonlocal models, especially in multiscale modeling and simulations, such as when macroscopic structures are influenced by  microscopic nonlocal effects. Various analytical techniques have been  employed to study this convergence,  including Talyor expansions \cite{MR1727557,SILLING201073}, functional analysis \cite{MR3164542, MR3492730, MR4818331}, and maximum principles \cite{du2019uniform,MR4374289}. Nevertheless, existing studies often lack proofs of local limit convergence in the maximum norm under realistic regularity assumptions, leaving a gap in solution stability analysis.

 Another crucial aspect is the asymptotic compatibility (AC) of numerical discretizations, that is whether discrete approximations also converges to the corresponding local limit as both the nonlocal horizon and mesh size tend to zero.  A significant advance was made by Tian and Du, who introduced the AC framework \cite{MR3231986,MR3143839}. This concept has since been generalized to Neumann-type constraints \cite{MR3621707},  spectral methods on periodic domains \cite{MR3514714,MR3591174},  discontinuous Galerkin schemes \cite{MR3854053,MR4812235}, nonlocal space–time models \cite{MR3639119}, and advanced gradient-recovery techniques \cite{MR3807955}. Recent contributions \cite{du2024errorestimatesfiniteelement,MR4464473} improved convergence analysis for finite element approximations with approximated interaction neighborhoods. However, many of these extensions either depend on specialized mesh structures or do not explicitly incorporate nonlocal jump conditions, thereby restricting their utility for NLI problems.
 
 Although classical interface problems have been widely studied using techniques such as  the immersed boundary method \cite{peskin1977numerical,peskin2002immersed}, extended/cut finite element method \cite{fries2010extended,hansbo2014cut} and immersed interface method \cite{he2011immersed,lin2013immersed,lin2015partially}, numerical approaches for NLI problems remain less developed.   
 Early formulations for NLI problems were established in  \cite{MR3367699, MR3125434}, yet they lacked rigorous error analysis. subsequent works \cite{MR4108294} established well-posedness and convergence under conforming discretizations, which can be computationally expensive for complex interfaces. State-based peridynamic models \cite{MR4208944, MR4455912} have addressed certain discontinuity issues but generally fail to achieve AC. More recently,  \cite{MR4602510} explored local limits and asymptotically compatible formulations for nonlocal interface problems with jumps. Despite these efforts, several key questions remain open: 
However, several critical issues remain unresolved:  (1) Can the local limit with respect to $\delta$ be rigorously proven  in the maximum norm under practical regularity assumptions?  (2) What is the optimal convergence rate with respect to $\delta$ for the continuum nonlocal models? (3) Is it possible to construct a theoretical framework that establishes the optimal convergence order for an AC unfitted finite element scheme applied to NLI problems?

Motivated by these open questions outlined above and the practical need to model engineering problems involving inherent discontinuities, such as fracture in composite materials or dynamic damage evolution, this work develops a robust  AC unfitted finite element method (FEM) on a Cartesian grid for one-dimensional nonlocal elliptic interface problems. In contrast to previous approaches \cite{MR4108294, MR4602510}, which often rely on conforming discretizations, the proposed method directly embeds the nonlocal jump conditions into the weak formulation via a Nitsche-type approach. This allows the use of a single Cartesian background mesh, greatly simplifying
implementation and improving computational efficiency.
The main contributions of this work  are summarized as follows:

 \textbf{(1) Local limit analysis in the maximum norm.} 
An extended maximum principle is established for NLI problems, providing explicit  control over solution behavior through the flux operator $\mathcal{F}(u_1,u_2)$.  Combined  with a rigorous asymptotic consistency analysis showing $\mathcal{F}(u_1, u_2) = \sigma_1 u_1'(\alpha) - \sigma_2 u_2'(\alpha) + \mathcal{O}(\max(\delta_1^2, \delta_2^2)),$ this work proves second-order convergence of nonlocal solutions to the local limit, i.e., $\|u - u_0\|_{\infty, \Omega_1 \cup \Omega_2} \leq C \max(\delta_1^2, \delta_2^2).$ This result fills a notable gap in the maximum-norm convergence analysis for nonlocal interface models.

\textbf{(2) Asymptotically compatible unfitted FEM scheme.} A Nitsche-type variational formulation is proposed to directly embed nonlocal jump conditions into the weak form,  thereby avoiding the need for body-fitted meshes. Computational efficiency is further improved via a novel block matrix decomposition strategy and an exact integration technique using linear transformations to reference elements. The resulting method preserves asymptotic compatibility while achieving optimal convergence rates, even for problems with discontinuous solutions across interfaces.

\textbf{(3) Optimal $(k+1)$-order convergence  analysis.}  The convergence of the proposed scheme is rigorously established, achieving order $(k+1)$ in both the energy and $L^2$-norms
\begin{align*}
		\|u - u^h\|_\delta \leq C \max\{\delta_1^{-1}, \delta_2^{-1}\} h^{k+1}, \quad 
		\|u - u^h\|_{L^2} \leq C \max\{\delta_1^{-1}, \delta_2^{-1}\} h^{k+1}.
	\end{align*}
The analysis accommodates arbitrary polynomial degree $k$,  allows non-matching horizon parameters $\delta_1\neq\delta_2$, and accurately captures solution jumps at multiple interfaces.  Numerical experiments  validate the robustness and high accuracy of the proposed scheme.

The paper is organized as follows.
Section 2 develops the maximum principle for nonlocal interface problems and proves second-order convergence to local limit under suitable constrained volume conditions.
Section 3 presents the asymptotically compatible unfitted finite-element method and derives corresponding optimal error estimates.
Section 4 provides numerical validation.
Conclusions are given in Section 5, and detailed proofs are collected in the Appendix.

	\section{The convergence limits from the nonlocal solutions to local solutions}
	\label{sec;nonlocal-local}
	\subsection{Maximum principle for nonlocal interface problem}
Maximum principle plays a central role in our analysis. Although well-established for local elliptic interface problems, its nonlocal counterpart must account for long-range interactions and interface jump conditions. We therefore establish a maximum principle for nonlocal elliptic interface problems to control the pointwise behavior of solutions.

To quantify the solution regularity, we introduce the space of bounded continuous functions  
	$
		C_b(\Omega) := \{u\in C(\Omega): u \text{ is bounded}\}, 
	$
	equipped with the supremum norm $\|u\|_{\infty, \Omega}:=\sup\limits_{x\in\Omega}|u(x)|.$
	Its natural extension  to higher regularity is defined as
		$
		C_b^m(\Omega) := \{u^{(k)} \in C_b(\Omega): \forall \text{ nonnegative integer} \; k \leq m \} 		$
		for any  integer \( m\geq 0 \), endowed with the  norm $
	\|u_i\|_{C_b^m(\Omega_i)} := \max\limits_{0 \leq k \leq m} \|u_i^{(k)}\|_{\infty,\Omega_i}. $ 
	For the interface problem, we further introduce the trace norm
	$\|u\|_{\infty, b} :=\sup_{x\in b}|u(x)|,$  where  $b= \Gamma,$ $\mathcal{I}_1^D$  or $\mathcal{I}_2^D$,
to capture the behavior of the solution on the interface and Dirichlet boundaries.
The global solution is measured by
$
\|u\|_{\infty, \Omega} = \max\limits_{i=1,2}\|u_i\|_{\infty, \Omega_i}$, providing a uniform control across subdomains. 
Endowed with these norms, the product space $C_b(\Omega_1)\times C_b(\Omega_2)$ forms a Banach space, which serves as the functional framework for establishing the nonlocal maximum principle and related interface properties. 
	\begin{theorem}
	\label{th1}
		Let $\Omega=(a,b)$ be partitioned into  \(\Omega_1 = (a,\alpha)\) and \(\Omega_2 = (\alpha,b)\),    and let \(\delta_1, \delta_2 > 0\)  be horizon parameters. 
		Suppose  \( u_i \in C(\Omega_i\cup\mathcal{I}_i) \)  for $i=1,2$  satisfying (i) \( \mathcal{L}_i u_i \leq 0 \) in \( \Omega_i \);  
		(ii)  $ u_1(x) = u_2(x) $,   $\forall  x \in  \Gamma $; and  (iii) $ \mathcal{F}(u_1,u_2)(x) \leq 0, \forall  x \in \Gamma $.
		Then the global maximum of $u$ is bounded by
		\[
		\|u\|_{\infty, \Omega_1 \cup \Omega_2} \leq \max\left( \|u_1\|_{\infty, \mathcal{I}_1^D}, \|u_2\|_{\infty, \mathcal{I}_2^D}\right).
		\]
	\end{theorem}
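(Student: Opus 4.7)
The plan is to argue by contradiction. Suppose that $M^{*}:=\|u\|_{\infty,\Omega_{1}\cup\Omega_{2}}$ strictly exceeds $M:=\max(\|u_{1}\|_{\infty,\mathcal{I}_{1}^{D}},\|u_{2}\|_{\infty,\mathcal{I}_{2}^{D}})$. Take a maximizing sequence in $\Omega_{1}\cup\Omega_{2}$; by continuity of $u_{i}$ on $\Omega_{i}\cup\mathcal{I}_{i}$ and compactness of $\overline{\Omega_{i}}$, one can extract a point $x^{*}\in\overline{\Omega_{1}}\cup\overline{\Omega_{2}}$ at which $u$ attains the value $M^{*}$. I would then split into cases according to the location of $x^{*}$, observing that $\Gamma=(\alpha-\delta_{2},\alpha+\delta_{1})$ is itself contained in $\Omega_{1}\cup\Omega_{2}$ (since $\mathcal{I}_{2}^{J}\subset\Omega_{1}$ and $\mathcal{I}_{1}^{J}\subset\Omega_{2}$).

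For the interior cases $x^{*}\in\Omega_{1}\setminus\Gamma$ (or symmetrically $\Omega_{2}\setminus\Gamma$), I would run the classical nonlocal maximum principle argument. Since $u_{1}(x^{*})=M^{*}\geq u_{1}(y)$ for every $y$ in the integration domain, the integrand $[u_{1}(x^{*})-u_{1}(y)]\gamma_{\delta_{1}}(x^{*},y)$ is pointwise nonnegative, while hypothesis (i) gives $\mathcal{L}_{1}u_{1}(x^{*})\leq 0$. Hence $u_{1}(y)=M^{*}$ for a.e. $y$ in the effective support $\{\gamma_{\delta_{1}}(x^{*},\cdot)>0\}$. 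Iterating this vanishing-of-integrand step, the set where $u_{1}\equiv M^{*}$ propagates by hops of size at most $\delta_{1}$ and, since $a$ is only an isolated measure-zero point of the integration region $\Omega_{1}\cup\mathcal{I}_{1}$, eventually intersects $\mathcal{I}_{1}^{D}$. This contradicts $M^{*}>\|u_{1}\|_{\infty,\mathcal{I}_{1}^{D}}$.

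The delicate case is $x^{*}\in\Gamma$. By (ii), $u_{1}(x^{*})=u_{2}(x^{*})=M^{*}$, and $x^{*}$ lies in either $\mathcal{I}_{1}^{J}\subset\Omega_{2}$ or $\mathcal{I}_{2}^{J}\subset\Omega_{1}$, so exactly one of the operator identities $\mathcal{L}_{i}u_{i}(x^{*})\leq 0$ is directly applicable. Here the flux condition (iii) enters: evaluating $\mathcal{F}(u_{1},u_{2})(x^{*})$ and using (ii) to rewrite the $u_{1}$--contributions in terms of $u_{2}$ (or vice versa) produces a second nonnegative-integrand identity whose sign is fixed by $\mathcal{F}(u_{1},u_{2})\leq 0$. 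Combining this with $\mathcal{L}_{i}u_{i}(x^{*})\leq 0$ supplies the vanishing needed to force $u_{i}\equiv M^{*}$ across a nonlocal neighbourhood spanning both sides of $\alpha$. Once the maximum has been propagated to a genuine interior point of $\Omega_{1}$ or $\Omega_{2}$ away from $\Gamma$, the earlier interior argument takes over and drives the value $M^{*}$ all the way to $\mathcal{I}_{1}^{D}$ or $\mathcal{I}_{2}^{D}$, producing the contradiction.

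The main obstacle I anticipate is making the propagation argument rigorous in Case~C: the two kernels $\gamma_{\delta_{1}}$ and $\gamma_{\delta_{2}}$ govern different interaction neighbourhoods, and their sign-indefinite difference $\gamma_{\delta_{2}}-\gamma_{\delta_{1}}$ appears in $\mathcal{F}$, so I will have to bookkeep carefully which kernel controls which subregion and leverage (ii) to convert mixed-sign integrals into bona fide nonnegative ones before invoking (iii). The interior case is essentially mechanical once the propagation lemma is in hand; the flux-sign bookkeeping at the interface is what will require the most care.
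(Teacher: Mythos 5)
Your proposal is correct and follows essentially the same route as the paper: at a maximizer the hypothesis $\mathcal{L}_i u_i \le 0$ forces a pointwise nonnegative integrand to vanish, the constant value $M^*$ propagates in $\delta_i$-hops to the Dirichlet collars, and conditions (ii)--(iii) carry it across the interface --- exactly the paper's chain $\Omega_1 \to \Gamma \to \Omega_2^J \to \Omega_2 \to \mathcal{I}_2^D$. The ``delicate'' Case C you flag is in fact no harder than the interior case, since every $x^*\in\Gamma$ already lies in $\Omega_1$ or $\Omega_2$ and condition (ii) gives $u_1(x^*)=u_2(x^*)=M^*$, so the corresponding interior $\mathcal{L}_i$-propagation applies verbatim; the flux sign is only needed (as in the paper) to push the value $M^*$ from $\mathcal{I}_2^J$ across the gap $\Omega_2^J=(\alpha+\delta_1,\alpha+\delta_2)$ on which $u_1$ is undefined.
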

	\begin{proof}
Suppose $u$ attains a positive maximum $M$ at some point $x_0 \in \Omega_1$  and assume  this maximum is not achieved on $\mathcal{I}_1^D$. Since $u_1(x_0) = M \geq u_1(y)$ for all $y\in \Omega_1 \cup \mathcal{I}_1$, the nonlocal operator $\mathcal{L}_1 u_1(x)$ evaluated  at $x_0$ satisfies
\[
    \mathcal{L}_1 u_1(x_0) = \int_{\Omega_1 \cup \mathcal{I}_1} [u_1(x_0) - u_1(y)] \gamma_{\delta_1}(x_0, y) \, dy \geq 0. 
\]
On the other hand, condition {\rm (i)} implies $\mathcal{L}_1 u_1(x_0) \leq 0$, so it follows that $\mathcal{L}_1 u_1(x_0) = 0$. This equality forces$u_1(y) = M$ for all $y \in B_{\delta_1}(x_0) \cap (\Omega_1 \cup \mathcal{I}_1)$. By continuity, $u_1 \equiv M$ in a neighborhood of $x_0$. From condition (ii), it follows that $u_2(\alpha) = u_1(\alpha) = M$. 

Now consider $x \in \mathcal{I}_1^J $.  A direct calculation shows that the flux operator satisfies
\[
\mathcal{F}(u_1, u_2)(x) = \frac{1}{2} \int_{\mathcal{I}_2^J} [u_2(x)- u_2(y)] (\gamma_{\delta_2} - \gamma_{\delta_1}) \, dy = \frac{1}{2} \int_{\mathcal{I}_2^J} [u_2(x)- M] (\gamma_{\delta_2} - \gamma_{\delta_1}) \, dy \leq 0.
\]
The sign-indefinite nature of $\gamma_{\delta_2} - \gamma_{\delta_1}$ forces $u_2(x) = M$ throughout $\mathcal{I}_1^J$.
Similarly,  for any $x \in \mathcal{I}_2^J$, the flux condition $\mathcal{F}(u_1, u_2)(x)\leq 0$ implies 
 $
    \int_{\Omega_2^J} [u_2(x) - u_2(y)] \gamma_{\delta_2}(x,y) \, dy \leq 0.
$
Since $u_2(x) = M \geq u_2(y)$, equality holds if and only if  $u_2(y) = M$ for all $y \in \Omega_2^J$.
     
For $x \in \Omega_2$ with $\text{dist}(x, \alpha) \leq \delta_2$, the nonlocal operator $\mathcal{L}_2 u_2(x)$ becomes
    \[
    \mathcal{L}_2 u_2(x) = \int_{\Omega_2 \cup \mathcal{I}_2} [u_2(x) - u_2(y)] \gamma_{\delta_2}(x, y) \, dy = \int_{\Omega_2 \cup \mathcal{I}_2} [M - u_2(y)] \gamma_{\delta_2}(x, y) \, dy \leq 0.
    \]
    Any value $u_2(y) > M$ would lead to $\mathcal{L}_2 u_2(x) > 0$,  contradicting the inequality. Hence $u_2(y) = M$ for all $y\in \Omega_2 \cup \mathcal{I}_2$. Finally, if  $u_2(x) < M$ for some $x \in \mathcal{I}_2^D$,  then
    \[
    \mathcal{L}_2 u_2(x) = \int_{b - \delta_2}^{b + \delta_2} [u_2(x) - u_2(y)] \gamma_{\delta_2}(x, y) \, dy = \int_{b - \delta_2}^{b+\delta_2} [u_2(x) - M] \gamma_{\delta_2}(x,y) \, dy < 0,
    \]
  which  contradicts the condition (i).
    Therefore, the maximum $M$ must occur on $\mathcal{I}_2^D$.
\end{proof}

To clearly distinguish the present results from both nonlocal formulations and their local counterparts, pointwise a priori estimates are now presented.	
	\begin{theorem}\label{Priori-non}
		Under the same assumptions as Theorem \ref{th1}, with additional regularity condition that  $u_i \in C_b^2(\Omega_i \cup \mathcal{I}_i^D)$ for $i=1,\,2$, there exists a constant $C = C(\gamma_1, \gamma_2, |\Omega|) $,  independent of the horizon parameters $\delta_1$ and $\delta_2$ such that  the following estimate holds
		\[
		\|u\|_{\infty, \Omega} \leq C \big(\|u\|_{\infty, \mathcal{I}^D} + \|\mathcal{L} u\|_{\infty, \Omega} + \|\mathcal{F}(u_1, u_2)\|_{\infty, \Gamma} \big).
		\]
	\end{theorem}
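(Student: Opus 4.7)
The proof will be by a comparison (barrier) argument built on the extended maximum principle of Theorem~\ref{th1}. The overall plan is to absorb the three data norms on the right-hand side---boundary value, bulk source, and interface flux---into a single auxiliary function so that, after subtraction, the residual satisfies the homogeneous hypotheses (i)--(iii) of Theorem~\ref{th1}, whose conclusion then translates back into the desired pointwise bound.

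First I would construct a barrier $\phi\in C_b^2$, defined on $\Omega\cup\mathcal{I}^D$ with $\phi_1=\phi_2$ on $\Gamma$, satisfying $\mathcal{L}_i\phi_i \ge c_L>0$ on $\Omega_i$, $\mathcal{F}(\phi_1,\phi_2) \ge c_F>0$ on $\Gamma$, and $\|\phi\|_\infty \le C_\phi$, with constants $c_L,c_F,C_\phi$ independent of $\delta_1,\delta_2$. The bulk condition is delivered by a single smooth quadratic $\phi(x) = K-(x-c)^2/(2\min(\sigma_1,\sigma_2))$: the symmetry of $\gamma_{\delta_i}$ together with the moment identity \eqref{kernel_scaling} gives $\mathcal{L}_i\phi_i = \sigma_i/\min(\sigma_1,\sigma_2)\ge 1$, using that $B_{\delta_i}(x)\subset\Omega_i\cup\mathcal{I}_i$ for every $x\in\Omega_i$ by the construction of the interaction sets. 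The constant $K$ is chosen large enough to keep $\phi$ nonnegative on $\Omega\cup\mathcal{I}^D$, so that $C_\phi$ depends only on $|\Omega|$ and the kernels.

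Next, set $A = \max\bigl(c_L^{-1}\|\mathcal{L}u\|_{\infty,\Omega},\, c_F^{-1}\|\mathcal{F}(u_1,u_2)\|_{\infty,\Gamma}\bigr)$ and $B=\|u\|_{\infty,\mathcal{I}^D}$, and define $v := u - A\phi - B$. By linearity of $\mathcal{L}_i$ and $\mathcal{F}$, the choice of $A$ forces $\mathcal{L}_i v_i \le 0$ in $\Omega_i$; the matching $\phi_1=\phi_2$ yields $v_1=v_2$ on $\Gamma$; and $\mathcal{F}(v_1,v_2)\le 0$ on $\Gamma$. Applying Theorem~\ref{th1} to $v$ gives $\|v\|_{\infty,\Omega}\le\max_i\|v_i\|_{\infty,\mathcal{I}_i^D}\le 2B + AC_\phi$, whence $\|u\|_{\infty,\Omega}\le 3B+2AC_\phi$. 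An identical argument applied to $-u$ furnishes the corresponding lower bound. Re-expressing $A$ and $B$ in terms of the data norms yields the claimed estimate with $C=C(\gamma_1,\gamma_2,|\Omega|)$.

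The main obstacle is verifying the flux-barrier bound $\mathcal{F}(\phi_1,\phi_2)\ge c_F>0$ with $c_F$ independent of $\delta_1$ and $\delta_2$. The piecewise definition \eqref{flux} involves the sign-indefinite weight $\gamma_{\delta_1}-\gamma_{\delta_2}$ together with a supplementary $\Omega_2^J$ term, and it degenerates identically in the symmetric case $\delta_1=\delta_2$. That degenerate case is harmless because $\mathcal{F}(u_1,u_2)\equiv 0$ there for every admissible $u$, so only the bulk barrier is needed; for $\delta_1<\delta_2$ one must extract a quantitative lower bound on $\mathcal{F}(\phi,\phi)$ by pairing the moment condition with careful estimates of $\gamma_{\delta_2}-\gamma_{\delta_1}$ across the support difference $B_{\delta_2}\setminus B_{\delta_1}$. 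Obtaining such a lower bound with the ratio $\|\mathcal{F}(u)\|_\infty/c_F$ uniformly controlled in $\delta$---possibly by splitting into two barriers, one tailored to $\mathcal{L}$ and one to $\mathcal{F}$, and combining by linearity---is the technical crux that keeps $C$ independent of the horizons.
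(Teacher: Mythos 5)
Your overall strategy is the same as the paper's: build a comparison function whose nonlocal Laplacian and interface flux dominate the data, subtract it from $u$, and invoke Theorem~\ref{th1}. The bulk and boundary parts of your barrier are fine and essentially reproduce the paper's computation ($\mathcal{L}_i$ of a downward quadratic equals $-2B_i\sigma_i$ by the moment condition, since $B_{\delta_i}(x)\subset\Omega_i\cup\mathcal{I}_i$ for $x\in\Omega_i$). The difference is organizational: the paper uses a piecewise barrier $A_i+B_i|x-\alpha|^2+\epsilon\phi_i$ with an auxiliary $\phi_i$ solving $\mathcal{L}_i\phi_i=1$, $\phi_i|_{\mathcal{I}_i^D}=0$, and tunes the single parameter $\epsilon$ against both $\|\mathcal{L}u\|_\infty$ and $\|\mathcal{F}(u_1,u_2)\|_\infty$, whereas you fold everything into one quadratic scaled by $A$ plus a constant $B$.

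The genuine gap is exactly the one you flag: the flux lower bound $\mathcal{F}(\phi_1,\phi_2)\ge c_F>0$ uniformly in $\delta_1,\delta_2$, and none of your proposed escape routes can close it. First, hypothesis (ii) of Theorem~\ref{th1} requires the residual (hence the barrier) to satisfy $\phi_1=\phi_2$ on the entire layer $\Gamma=(\alpha-\delta_2,\alpha+\delta_1)$, not merely at $\alpha$; for $C^1$ barriers this forces $\phi_1'(\alpha)=\phi_2'(\alpha)$, so by Lemma~\ref{lemma:flux_limit} the admissible barrier flux is $(\sigma_1-\sigma_2)\phi'(\alpha)+\mathcal{O}(\max(\delta_1^2,\delta_2^2))$. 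When $\sigma_1=\sigma_2$ (with $\delta_1\ne\delta_2$, so $\mathcal{F}\not\equiv 0$) this is $\mathcal{O}(\delta^2)$ and cannot admit a $\delta$-uniform positive lower bound; compensating by taking $A\sim\delta^{-2}\|\mathcal{F}(u)\|_\infty$ would destroy the $\delta$-independence of $C$. Splitting into two barriers combined by linearity does not help, since each summand is subject to the same constraint. (Also, your remark that $\delta_1=\delta_2$ makes $\mathcal{F}$ vanish identically is only true if additionally $\gamma_1=\gamma_2$.) For comparison, the paper does not attempt a positive lower bound on the barrier flux at all: it asserts $|\mathcal{F}(w_1,w_2)|\le C_F\epsilon$ for the residual and then chooses $\epsilon$ proportional to the data, a step that is itself stated rather than derived. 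So your proposal correctly isolates the hard point, but as written it does not prove the theorem, and the specific single-smooth-quadratic construction provably cannot supply the missing flux bound.
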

	
	\begin{proof}
	An auxiliary function  $v(x)$ is constructed piecewise on $\Omega \cup \mathcal{I}^D$ as follows
		\[v(x) = \begin{cases}
			v_1(x) =
			A_1 + B_1|x-\alpha|^2 + \epsilon \phi_1(x),  \;\;x \in \Omega_1 \cup \mathcal{I}_1^D,\\
			v_2(x) =
			A_2 + B_2|x-\alpha|^2 + \epsilon \phi_2(x),  \;\;x \in \Omega_2 \cup \mathcal{I}_2^D,\\
		\end{cases}
		\]
		where $A_i = \|u_i\|_{\infty, \mathcal{I}_i^D} + |\Omega|^2/2\sigma_i$ controls the boundary values, $B_i =-1/2\sigma_i < 0$,  $\phi_i\in C_b^2(\Omega_i \cup \mathcal{I}_i^D)$ satisfies $\mathcal{L}_i \phi_i = 1$ with $\phi_i|_{\mathcal{I}_i^D} = 0$, and  $\epsilon > 0$ is a parameter to be determined.  By the nonlocal maximum principle \cite{du2019uniform} and the condition $\mathcal{L}_i \phi_i = 1$ with $\phi_i|_{\mathcal{I}_i^D} = 0$, there exists a constant $C_\phi = C_\phi(\gamma_i)$ such that
		$
		\|\phi_i\|_{\infty, \Omega_i} \leq C_\phi.
		$
		
		Now define  $w_i = u_i - v_i$. On the Dirichlet boundary $\mathcal{I}_i^D$,
		\begin{align*}
v_i(x)\big|_{\mathcal{I}_i^D} &= A_i + B_i|x-\alpha|^2 \geq \left(\|u_i\|_{\infty, \mathcal{I}_i^D} + \frac{|\Omega|^2}{2\sigma_i}\right) - \frac{|\Omega|^2}{2\sigma_i}  = \|u_i\|_{\infty, \mathcal{I}_i^D} \geq u_i(x)\big|_{\mathcal{I}_i^D},
\end{align*}
		which implies \( w_i  \leq 0 \)  on \( \mathcal{I}_i^D\). Within \( \Omega_i \), 
		noting that  \( \mathcal{L}_i A_i = 0 \),  \( \mathcal{L}_i \phi_i = 1 \), and 
		\begin{align*}
B_i \mathcal{L}_i(|x - \alpha|^2) = B_i\int_{\Omega_i \cup \mathcal{I}_i} \left[|x - \alpha|^2 - |y - \alpha|^2\right] \gamma_{\delta_i}(x,y) \, dy 
= -2B_i \sigma_i = 1,
\end{align*}
it  follows that  
		\begin{align*}
		\mathcal{L}_i w_i &= \mathcal{L}_i u_i - \Big( \mathcal{L}_i (A_i + B_i|x - \alpha|^2) + \epsilon \mathcal{L}_i \phi_i \Big)=\mathcal{L}_i u_i - (1 + \epsilon) \leq 0. 
		\end{align*}
		By the linearity of flux operator $\mathcal{F}$ and integrability of $\gamma_{\delta_i}$, the estimate 
		$|\mathcal{F}(w_1, w_2)| \leq C_F \epsilon $  hold,  where  $C_F$  depends on $\|\phi_i\|_\infty $  and  $ \gamma_{\delta_i} $.
		Choosing  $\epsilon =(\|\mathcal{F}(u_1,u_2)\|_{\infty, \Gamma} + \|\mathcal{L}_i u_i\|_{\infty,\Omega_i})/C_F$, yields 
		\[
		\mathcal{F}(w_1,w_2) \leq \mathcal{F}(u_1,u_2) + \mathcal{L}_i u_i \leq 0 \quad \text{on } \Gamma.
		\]
		Applying the nonlocal interface maximum principle (Theorem \ref{th1}), it is concluded that $w_i\leq 0$ in $\Omega_i$,   implying  \( u_i \leq v_i \).
		Therefore,
		\[
		\begin{split}
			\|u_i\|_{\infty, \Omega_i} \leq \|v_i\|_{\infty, \Omega_i} &= \max_{x \in \Omega_i} \left| A_i + B_i|x - \alpha|^2 + \epsilon \phi_i(x) \right| \leq A_i + \frac{|\Omega|^2}{2\sigma_i} + \epsilon C_\phi.
		\end{split}
		\]
		Substituting the expressions for $\epsilon$ and $A_i$ gives
		\[
		\|u_i\|_{\infty, \Omega_i}  \leq  \|u_i\|_{\infty, \mathcal{I}_i^D} + \dfrac{|\Omega|^2}{\sigma_i} + \frac{C_\phi}{C_F} (\|\mathcal{F}(u_1, u_2)\|_{\infty, \Gamma}+ \|\mathcal{L}_i u_i\|_{\infty,\Omega_i}).
		\]
		The constant $C$ can be taken as
		$
		C = \max\left(1, \frac{|\Omega|^2}{\min(\sigma_1, \sigma_2)}, \frac{C_\phi}{C_F}\right),
		$
		which depends on $\gamma_i$ (through $\sigma_i$, $C_\phi$, $C_F$) and $|\Omega|$, but remains independent of $\delta_1$ and  $\delta_2$. 
	\end{proof}

	\subsection{Approximation order between nonlocal elliptic interface problem to its local counterpart}
	This subsection  is devoted to analyzing the  asymptotic convergence rate of the nonlocal solution $u_i$  towards its local counterpart $u_{0i}$ as the horizon parameter $\delta_i\rightarrow 0$.  
The corresponding local interface problem is given by 
	\begin{equation}
		\label{interface-eq}
			\mathcal{L}_i^0 u_{0i}(x) :=-\sigma_i\dfrac{d^2u_i(x)}{dx^2}=f_{i}(x), \; x\in\Omega_i,\;i =1,2, 
	\end{equation}
	subject to  boundary conditions $u_{01}(a)=g_{01},\ u_{02}(b) =g_{02}$, and interface conditions 
	\begin{equation}
		\label{interface-jump}
			[\![u_0]\!]=0,  \;\qquad
			[\![\sigma\frac{\partial u_0}{\partial n}]\!] = \psi(x), \; x\in \Gamma_0,
	\end{equation}
	where $f_i\in C_b^2(\Omega_i)$ and  $\sigma$ takes the value  $\sigma_1$ in $\Omega_1$ and $\sigma_2$  in $\Omega_2$.  
	
	For  a function $\hat{u}_i\in C_b^4(\Omega_i\cup \mathcal{I}_i)$, the nonlocal operator $\mathcal{L}_i$ approximates the local operator $\mathcal{L}_i^0$ with second-order accuracy  at any point $x\in\Omega_i$.  As established in prior studies  (e.g., \cite{du2019uniform,MR3938295}), this relationship can be expressed as
	\begin{equation}\label{eq_convergence}
		\mathcal{L}_{i}\hat{u}_i(x)=\mathcal{L}_i^0\hat{u}_i(x)+\mathcal{O}(\delta_i^2).
	\end{equation}
A natural question arises: does the interface flux operator  $\mathcal{F}(u_1, u_2)$ exhibit analogous second-order convergence? An affirmative answer is essential to ensure consistency between the nonlocal and local interface formulations. The following lemma provides such a result.	\begin{lemma} 
	\label{lemma:flux_limit}
		Let $\hat{u}_i \in C_b^4(\Omega_i\cup\mathcal{I}_i)$  for $i=1,2$. Assume the kernels $\{\gamma_{\delta_i}\}$ satisfies \eqref{kernel_properties}, \eqref{kernel_scaling}  and
	\begin{equation}
	\label{kernel_bound}
	G(\gamma_{\delta_i}):=\sup_{x\in\Omega_i\cup\mathcal{I}_i}\int_{B_{\delta_i}\cap(\Omega_i\cup\mathcal{I}_i)}\gamma_{\delta_i}(x,y)\mathrm{d}y\lesssim \delta_i^{-2}.
	\end{equation}
	 Then, for $x \in  \Gamma $, the nonlocal flux operator $\mathcal{F}(u_1, u_2)$ converges to its local counterpart as  
	  		\begin{equation}\label{eq:flux_convergence}
			\mathcal{F}(\hat{u}_1, \hat{u}_2) = \sigma_1 \hat{u}_1'(\alpha) - \sigma_2 \hat{u}_2'(\alpha) + \mathcal{O}(\max(\delta_1^2, \delta_2^2)).
		\end{equation}
	\end{lemma}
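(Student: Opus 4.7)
The plan is to Taylor-expand $\hat{u}_1$ and $\hat{u}_2$ around the interface point $\alpha$ and to reduce the lemma to the evaluation of weighted kernel moments, which are then controlled via the scaling $\gamma_{\delta_i}(s)=\delta_i^{-3}\gamma_i(s/\delta_i)$, the moment condition \eqref{kernel_scaling}, and the uniform bound \eqref{kernel_bound}.

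First, I would treat the two cases $x\in\mathcal{I}_1^J$ and $x\in\mathcal{I}_2^J$ separately, substituting for each the fourth-order expansion
\[
  \hat{u}_i(x)-\hat{u}_i(y) = (x-y)\hat{u}_i'(\alpha) + \frac{(x-\alpha)^2-(y-\alpha)^2}{2}\hat{u}_i''(\alpha) + \frac{(x-\alpha)^3-(y-\alpha)^3}{6}\hat{u}_i'''(\alpha) + R_i(x,y),
\]
with $|R_i(x,y)|\lesssim \max(|x-\alpha|,|y-\alpha|)^4\|\hat{u}_i\|_{C_b^4}$, into the piecewise formula for $\mathcal{F}(u_1,u_2)$. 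Since both $x$ and $y$ lie within distance $\max(\delta_1,\delta_2)$ of $\alpha$, this recasts the problem in terms of one-sided kernel moments of the form
\[
  \int (x-y)^{k}\gamma_{\delta_2}(x,y)\,dy
  \quad \text{and} \quad
  \int (x-y)^{k}\bigl[\gamma_{\delta_1}(x,y)-\gamma_{\delta_2}(x,y)\bigr]\,dy
\]
taken over $\mathcal{I}_1^J$, $\mathcal{I}_2^J$, and $\Omega_2^J$.

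Next, I would evaluate these moments through the dimensionless change of variables $u=(x-y)/\delta_i$, which reduces each one-sided moment to a universal integral of $\gamma_i$ on a subinterval of $(-1,1)$ whose endpoints depend on $(x-\alpha)/\delta_i$. The second-order moments are where the hypothesis \eqref{kernel_scaling} enters to produce the factor $\sigma_i$, and the central combinatorial identity to verify is that, once the leading-order contributions from all pieces are summed, the first derivatives of $\hat{u}_1$ and $\hat{u}_2$ assemble exactly into $\sigma_1\hat{u}_1'(\alpha)-\sigma_2\hat{u}_2'(\alpha)$. The higher-order terms involving $\hat{u}_i''(\alpha)$, $\hat{u}_i'''(\alpha)$, and the remainder $R_i$ are then bounded by $C\max(\delta_1^2,\delta_2^2)\|\hat{u}_i\|_{C_b^4}$, invoking \eqref{kernel_bound} to control the kernel mass on each piece.

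The main obstacle is algebraic bookkeeping: the one-sided integration domains do not annihilate odd kernel moments by symmetry, so a priori each piece of $\mathcal{F}$ contains contributions that are formally $\mathcal{O}(\delta^{-1})$. The lemma must therefore emerge from precise cancellations---between the two integrals on $\Omega_2^J$ and $\mathcal{I}_1^J$ in case $x\in\mathcal{I}_2^J$, and through the kernel difference $\gamma_{\delta_1}-\gamma_{\delta_2}$ in case $x\in\mathcal{I}_1^J$---which become especially delicate in the non-matching-horizon regime $\delta_1\neq\delta_2$, where one cannot appeal to kernel symmetry to equate the two $\sigma_i$-contributions. Verifying these cancellations term by term, and tracking that the residual is $\mathcal{O}(\max(\delta_1^2,\delta_2^2))$ uniformly in $x\in\Gamma$, constitutes the bulk of the argument.
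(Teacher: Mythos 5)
Your overall machinery (Taylor expansion of $\hat u_i$ about $\alpha$, rescaling by $\delta_i$, and the moment condition \eqref{kernel_scaling} producing the factors $\sigma_i$) is the same as the paper's. The decisive difference is that you aim to prove the identity pointwise for each fixed $x\in\Gamma$, and you correctly flag as ``the main obstacle'' that each piece of $\mathcal{F}$ then contributes one-sided first moments of size $\mathcal{O}(\delta_i^{-1})$ whose integration limits depend on $(x-\alpha)/\delta_i$, hoping these cancel. They do not: for a fixed $x$ the leading term is genuinely $\mathcal{O}(\delta_i^{-1})$ and $x$-dependent. A one-line test makes this visible: take $\delta_1<\delta_2$, $u_1=u_2$ linear with slope $1$, and $x=\alpha-t\delta_2\in\mathcal{I}_2^J$; then the first integral in \eqref{flux} equals $-\delta_2^{-1}\int_{\delta_1/\delta_2}^{1}(t+s)\gamma_2(t+s)\,ds$, which depends on $t$ and grows like $\delta_2^{-1}$, while the claimed limit $\sigma_1-\sigma_2$ is a fixed constant. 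So the cancellation you plan to verify ``term by term'' is not there to be found at fixed $x$, and the proposed route stalls at exactly the step you identified as the bulk of the work.

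What the paper actually proves (Appendix A.1) is the integrated statement: it pairs $\mathcal{F}$ with a test function and computes $(\mathcal{F},v)_{\mathcal{I}_2^J}+(\mathcal{F},v)_{\mathcal{I}_1^J}$, splitting into five double integrals $\mathcal{I}_1,\dots,\mathcal{I}_5$ over both $x$ and $y$. The extra integration in $x$ over $\mathcal{I}_i^J$ does two things at once: its measure $\sim\delta_i$ absorbs the pointwise $\mathcal{O}(\delta_i^{-1})$ magnitude, and it converts the $x$-dependent one-sided moments into full double integrals satisfying $\mathcal{I}^1_{\sigma_i}=2\sigma_i$ and $\mathcal{I}^2_{\sigma_i}=0$, which is where the clean constants $\sigma_i$ and the vanishing of the second-derivative contribution come from. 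To repair your argument, either reformulate the target as this weak/distributional statement (which is what the paper's proof in fact establishes, and arguably what the lemma should say), or supply an additional averaging step over $x\in\Gamma$; a strictly pointwise proof of \eqref{eq:flux_convergence} as stated is not available.
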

	
	For clarity of exposition, the detailed proof is deferred to Appendix \ref{app:flux_proof}.
	
	\begin{theorem} 
		\label{thm:nonlocal_to_local}
		Suppose $u_0 \in C_b^4(\Omega_1 \cup \Omega_2)$ is the unique solution to the local interface problem \eqref{interface-eq}-\eqref{interface-jump}, and assume the family of kernels $\{\gamma_{\delta_i}\}$ satisfies \eqref{kernel_properties}, \eqref{kernel_scaling} and \eqref{kernel_bound}.
		Let  $u$ be the unique solution to the nonlocal interface problem \eqref{non-interface-eq}-\eqref{non-interface-jump} with boundary conditions  constructed as
		\[
		g_i(x) = g_{0i} + (x - x_i)\dfrac{du_{0i}}{dx}(x_i), \quad x_i = a \text{ or } b, i=1,2.
		\]
		Then there exists a constant $C > 0$, independent of $\delta_1, \delta_2$, such that
		\[
		\|u - u_0\|_{\infty, \Omega_1 \cup \Omega_2} \leq C \max(\delta_1^2, \delta_2^2).
		\]
	\end{theorem}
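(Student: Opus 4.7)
The plan is to define the pointwise error $e_i(x) := u_i(x) - u_{0i}(x)$ on $\Omega_i \cup \mathcal{I}_i$, where each $u_{0i}$ is smoothly extended from $\Omega_i$ into its interaction zones using its $C_b^4$ regularity, and then to invoke the a priori estimate of Theorem \ref{Priori-non} applied to $\pm e$. This reduces the proof to showing that each of three quantities is of order $\mathcal{O}(\max(\delta_1^2,\delta_2^2))$: the boundary trace $\|e\|_{\infty, \mathcal{I}^D}$, the volume residual $\|\mathcal{L} e\|_{\infty, \Omega}$, and the flux residual $\|\mathcal{F}(e_1,e_2)\|_{\infty,\Gamma}$.

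For the boundary term, the stipulated form $g_i(x) = g_{0i} + (x-x_i)\, u_{0i}'(x_i)$ is precisely the first-order Taylor polynomial of $u_{0i}$ at $x_i$, so the Lagrange remainder yields $|e_i(x)| = |u_{0i}(x)-g_i(x)| \leq \tfrac{1}{2}\|u_{0i}''\|_{\infty}(x-x_i)^2 \leq C\delta_i^2$ on $\mathcal{I}_i^D$. For the volume term, $\mathcal{L}_i u_i = f_i$ combined with the pointwise consistency \eqref{eq_convergence} gives $\mathcal{L}_i u_{0i} = \mathcal{L}_i^0 u_{0i} + \mathcal{O}(\delta_i^2) = f_i + \mathcal{O}(\delta_i^2)$, hence $\|\mathcal{L}_i e_i\|_{\infty,\Omega_i} = \mathcal{O}(\delta_i^2)$. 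For the flux term, applying Lemma \ref{lemma:flux_limit} to the extended $u_{0i}$ and using the local jump condition \eqref{interface-jump}, $\sigma_1 u_{01}'(\alpha)-\sigma_2 u_{02}'(\alpha) = \psi$, yields $\mathcal{F}(u_{01},u_{02}) = \psi + \mathcal{O}(\max(\delta_1^2,\delta_2^2))$; since $\mathcal{F}$ is linear in each argument and $\mathcal{F}(u_1,u_2)=\psi$, subtraction gives $\|\mathcal{F}(e_1,e_2)\|_{\infty,\Gamma} = \mathcal{O}(\max(\delta_1^2,\delta_2^2))$.

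Before invoking Theorem \ref{Priori-non}, whose proof rests on the maximum principle of Theorem \ref{th1} and requires the interface continuity $e_1 = e_2$ on $\Gamma$, this hypothesis for $e$ must be verified. This forces the compatibility $\varphi = u_{02} - u_{01}$ on $\Gamma$ between the nonlocal data and the extension of the local solution across $\alpha$, analogous to the Taylor-type matching imposed on the Dirichlet data $g_i$. Documenting this cleanly is the most delicate step, because extensions of $u_{0i}$ across $\alpha$ are not canonical and the $C_b^4$ extension must be consistent with the data used to generate $u$. Once this compatibility is in place, applying Theorem \ref{Priori-non} to both $e$ and $-e$ and combining the three estimates above yields the two-sided pointwise bound $\|u - u_0\|_{\infty, \Omega_1\cup\Omega_2} \leq C\max(\delta_1^2, \delta_2^2)$ with a constant $C$ depending only on $\gamma_1,\gamma_2$, $|\Omega|$, and $\|u_0\|_{C_b^4}$, independent of $\delta_1$ and $\delta_2$.
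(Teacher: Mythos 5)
Your proposal follows essentially the same route as the paper: extend $u_0$ into the interaction zones by Taylor polynomials, bound the boundary, volume, and flux residuals via the Taylor remainder, the consistency estimate \eqref{eq_convergence}, and Lemma \ref{lemma:flux_limit}, and conclude with the a priori estimate of Theorem \ref{Priori-non}. The interface-compatibility condition $e_1 = e_2$ on $\Gamma$ that you flag is indeed required by condition (ii) of Theorem \ref{th1} and is passed over silently in the paper's proof, so your explicit attention to it is a refinement rather than a deviation.
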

	
	\begin{proof}
		Let $\widetilde{u}_0$ be the piecewise fourth-order Taylor approximation of $u_0$ defined by 
		\begin{equation*}
			\widetilde{u}_0(x) =
			\begin{cases}
				\displaystyle
				\sum\limits_{m=0}^4 \dfrac{(x - a)^m}{m!} \partial_x^m{u_{01}(a)}, & x \in \mathcal{I}_1^D, \nonumber \\
				\displaystyle u_{01}(x), & x \in \Omega_1, \nonumber \\
				\displaystyle u_{02}(x), & x \in \Omega_2, \nonumber \\
				\displaystyle \sum\limits_{m=0}^4 \dfrac{(x - b)^m}{m!} \partial_x^m u_{02}(b), & x \in \mathcal{I}_2^D. \nonumber
			\end{cases}
		\end{equation*}
		Since $u_0 \in C_b^4(\Omega_1\cup\Omega_2)$, the extension $\widetilde{u}_0$ belongs to  $C_b^4(\mathcal{I}_1^D\cup\Omega_1\cup\Omega_2\cup\mathcal{I}_2^D)$.
		
		For $x \in \Omega_i$, the second-order accuracy of the nonlocal operator \eqref{eq_convergence} implies
		\begin{equation}
		\label{eq:th3-1}
			\mathcal{L}_{i} \widetilde{u}_0 = \mathcal{L}_i^0 \widetilde{u}_0 + \mathcal{O}(\delta_i^2) = \mathcal{L}_i^0 u_{0i} + \mathcal{O}(\delta_i^2)
			= f_{i} + \mathcal{O}(\delta_i^2),\quad i=1,2.
		\end{equation}
		Define the error $e = u - \widetilde{u}_0$. From \eqref{non-interface-eq} and \eqref{eq:th3-1}, it follows that 
		\begin{equation}\label{thm:nonlocal_to_local-eq1}
			\mathcal{L}_i e  =\mathcal{L}_i (u - \widetilde{u}_0)  =  \mathcal{O}(\delta_i^2), \quad x \in \Omega_i.
		\end{equation}
		On the Dirichlet region $\mathcal{I}_i^D$,   the Taylor remainder estimate gives
		\begin{equation}\label{thm:nonlocal_to_local-eq2}
		\|e\|_{\infty, \mathcal{I}_i^D} \leq \frac{\delta_i^4}{24} \left\|\partial_x^4 u_{0i}\right\|_{\infty, \mathcal{I}_i^D}.
		\end{equation}
		By Lemma \ref{lemma:flux_limit},  and the construction of $\tilde{u}_0$,  the flux error satisfies
		\begin{equation}\label{thm:nonlocal_to_local-eq3}
		\mathcal{F}(e_1, e_2)= \mathcal{F}(u_1, u_2) -
		\mathcal{F}(\tilde{u}_{01}, \tilde{u}_{02})  =\psi - \Big(\sigma_1 \tilde{u}_{01}'(\alpha) - \sigma_2 \tilde{u}_{02}'(\alpha) +\mathcal{O}(\max(\delta_1^2, \delta_2^2) \Big).
		\end{equation}
		Since $\tilde{u}_{0i}'(\alpha) = u_{0i}'(\alpha) + \mathcal{O}(\delta_i^2)$  and  $\sigma_1 u_{01}'(\alpha) - \sigma_2 u_{02}'(\alpha) = \psi$, it is conclude that
		\[
		\|\mathcal{F}(e_1, e_2)\|_{\infty, \Gamma} = \mathcal{O}(\max(\delta_1^2, \delta_2^2)).
		\]
		Applying the a priori estimate from Theorem \ref{Priori-non} yields
		\[
		\|e\|_{\infty, \Omega_i} \leq \left( \max_i \|e\|_{\infty, \mathcal{I}^D_i} + \max_i \|\mathcal{L}_i e\|_{\infty, \Omega_i} + \|\mathcal{F}(e_1, e_2)\|_{\infty, \Gamma} \right).
		\]
Substituting the bounds in \eqref{thm:nonlocal_to_local-eq1} - \eqref{thm:nonlocal_to_local-eq3} into the above inequality gives
\[\|e\|_{\infty, \Omega_1\cup\Omega_2} \leq C\max(\delta_1^2, \delta_2^2).
\]
Finally, since  $\widetilde{u}_0=u_0$ on $\Omega_1\cup\Omega_2$, the triangle inequality implies
		\[
		\|u - u_0\|_{\infty, \Omega_1 \cup \Omega_2} \leq \|e\|_{\infty, \Omega_1 \cup \Omega_2} + \|\widetilde{u}_0 - u_0\|_{\infty, \Omega_1 \cup \Omega_2} \leq C \max(\delta_1^2, \delta_2^2),
		\]
	which completes the proof. \end{proof}
	
	\begin{remark}
		If the Dirichlet boundary conditions for the nonlocal problem are prescribed directly as
	$u_1 = g_{01}$ on $\mathcal{I}_1^D$, and  $u_2 = g_{02}$ on  $\mathcal{I}_2^D$,
		where $g_{01}$ are taken from the local problem, then the convergence order in  Theorem  \ref{thm:nonlocal_to_local}
		 reduces to  $\max(\delta_1, \delta_2)$.  This is due to the mismatch between the local boundary data and the nonlocal extension, which introduces a first-order error in the boundary region.
	\end{remark}
	
	\section{Unfitted finite element method for nonlocal interface problem}
	\label{sec;unfitted-FEM}
This section presents the finite element discretization of the nonlocal interface problem \eqref{non-interface-eq}-\eqref{non-interface-jump}.
In contrast to approaches that align element boundaries with the interface $\Gamma_0$ to capture flux jumps, an unfitted methodology is adopted. The scheme employs discontinuous piecewise polynomial finite  element spaces on a uniform background grid,  thereby eliminating any dependence of the discretization on the interface location.

\subsection{Weak formulation}
The solution and test functions are defined piecewise as follows 
	\[
	u = 
	\begin{cases}
		u_1 & \text{in } \Omega_1\cup\mathcal{I}_1, \\
		u_2 & \text{in } \Omega_2\cup\mathcal{I}_2,
	\end{cases}
	\qquad\qquad
	v = 
	\begin{cases}
		v_1 & \text{in } \Omega_1\cup\mathcal{I}_1, \\
		v_2 & \text{in } \Omega_2\cup\mathcal{I}_2.
	\end{cases}
	\]
	The \textit{nonlocal energy norm} $\|\cdot\|_{\delta_i}  := \sqrt{(u, u)_{\delta_i}}$ is induced by inner product
	\[
	(u, v)_{\delta_i} := \frac{1}{2} \int_{\Omega_i \cup \mathcal{I}_i^D} \int_{\Omega_i \cup \mathcal{I}_i^D} [u_i(x) - u_i(y)][v_i(x) - v_i(y)] \gamma_{\delta_i}(x, y) \, dy \, dx. 
	\]
To account for coupling effects across the interface $\Gamma$, an additional inner product is introduced
	\[
	\begin{split}
		(u, v)_{\Gamma}&:=\frac{1}{2}\sum_{i = 1}^{2}\int_{\mathcal{I}_j^J}\int_{\mathcal{I}_i^J}[u_i(x) - u_i(y)][v_i(x) - v_i(y)]\gamma_{\delta_i}(x, y)dydx\\
		&\;\quad+\int_{\mathcal{I}_2^J}\int_{\Omega_2^J}[u_2(x) - u_2(y)][v_2(x) - v_2(y)]\gamma_{\delta_2}(x, y)dydx,\\
	\end{split}
	\]
	with the corresponding norm $
	\|u\|_\Gamma := \sqrt{(u, u)_\Gamma}. $
The global nonlocal energy norm is  then defined as
	$
	\|u\|^2_\delta := {\|u\|_{\delta_1}^2 + \|u\|_{\delta_2}^2 + \|u\|_\Gamma^2}.
	$
The  energy space is given by 
$$
V_{\delta}:= \{v\in L^2(\Omega\cup\mathcal{I}^D):\; \|u\|_\delta <\infty,\;v_i =0 {\;\rm in\;} \mathcal{I}_i^D,\; v_1=v_2  {\rm\; in\;}  \Gamma\},
$$
and its dual space is denoted by $V'_{\delta}$. The weak  formulation of \eqref{non-interface-eq}-\eqref{non-interface-jump} reads: given $f\in V'_{\delta}$ and $\psi\in V'_{\delta}$, find $u\in V_{\delta}$ such that
	\begin{equation}
	\label{non-weak-form}
	a(u_1,u_2,v_1,v_2)=l(v_1,v_2), \quad {\rm for\; all\; } v\in V_{\delta},
	\end{equation}
	where the bilinear and linear forms are respectively defined as
	\begin{align*}
		a(u_1, u_2, v_1, v_2) &= \frac{1}{2} \sum_{i = 1}^{2} \int_{\Omega_i\cup\mathcal{I}_i^D} \int_{\Omega_i\cup\mathcal{I}_i^D} (u_i(x) - u_i(y))(v_i(x) - v_i(y)) \gamma_{\delta_i}(x, y) dydx \\
		& \quad + \frac{1}{2} \sum_{i = 1}^{2} \int_{\mathcal{I}_j^J} \int_{\mathcal{I}_i^J} (u_i(x) - u_i(y))(v_i(x) - v_i(y)) \gamma_{\delta_i}(x, y) dydx \\
		& \quad + \int_{\mathcal{I}_2^J} \int_{\Omega_2^J} (u_2(x) - u_2(y))(v_2(x) - v_2(y)) \gamma_{\delta_2}(x,y) dydx,\\
		l(v_1, v_2) &= \sum_{i = 1}^{2} \int_{\Omega_i} f_i(x) v_i(x) \, dx + \int_{\mathcal{I}_2^J} \psi_1(x) v_1(x) \, dx + \int_{\mathcal{I}_1^J} \psi_2(x) v_2(x) \, dx.
	\end{align*}
The well-posedness of the weak formulation  \eqref{non-weak-form} has been established in \cite{MR4602510}.

	\subsection{Unfitted finite element discretization}	
	A uniform partition $\mathcal{T}^h$ of the computational domain $\Omega \cup \mathcal{I}^D$  is constructed,  with nodes distributed as
	$
	a-\delta_1 = x_0  < \cdots < x_{M_1-1} < a = x_{M_1}  < \cdots < x_{M_1+N} = b <  \cdots < x_{N+M_1+M_2} = b+\delta_2.
	$
Notably, the interface point  $\alpha\in(a, b)$ is not required to align with any grid nodes.  
The mesh parameter $h$ denotes the length  of uniform subinterval $K = (x_j, x_{j+1})$, for $ j = 0, 1, \cdots, N+M_1+M_2-1$. To handle the nonlocal interface $\Gamma=(\alpha-\delta_2, \alpha+\delta_1)$,  elements are classified as follows 
\begin{itemize}
\item Interface-intersecting elements: $\mathcal{T}_{\Gamma}^h=\{T\in\mathcal{T}^h: T\cap\Gamma\neq\varnothing\},$
\item Nonlocal interaction elements:
$
\mathcal{T}_{i, J}^h=\{T\!\in\!\mathcal{T}^h\!:\! T\cap\mathcal{I}_i^J\neq\varnothing\}, i=1,2, \mathcal{T}_{\Omega_2, J}^h=\{T\!\in\!\mathcal{T}^h\!:\! T\cap\Omega_2^J\neq\varnothing\},
$
\item Subdomain elements:
	$\mathcal{T}_i^h=\{T\in\mathcal{T}^h: \overline{T}\cap\overline{(\Omega_i\cup\mathcal{I}_i^D)}\neq\varnothing\},\quad i = 1,2.$
\end{itemize}
The corresponding  finite element spaces are defined as
$$
	V_i^h=\left\{v\in L^2(\Omega_i\cup\mathcal{I}_i^D):\left.v\right|_K\in\mathbb{P}^k(K),\ \forall K\in\mathcal{T}_i^h\right\},\;\;
	V^h=V_1^h \times V_2^h,
$$
with the constrained space 
$
V^{0,h}=\left\{v^h\in V^h:\left.v^h\right|_{\mathcal{I}^D}=0\right\},
$
where $\mathbb{P}^k(K)$ denotes by the space of polynomials of degree at most $k$ on element $K$. 
The discrete weak formulation  seeks $ u^h=(u_1^h,u_2^h)\in V^h$ such that  
	\begin{equation}
	\label{DG-formulation}
		a(u_1^h,u_2^h, v_1^h,v_2^h)=l(v_1^h,v_2^h),\quad \forall v^h\in V^{0,h}.
	\end{equation}
%
%
	
\subsection{Implementation}
	\begin{figure}[H]
		\centering
		\subfigure{\includegraphics[scale=0.17]{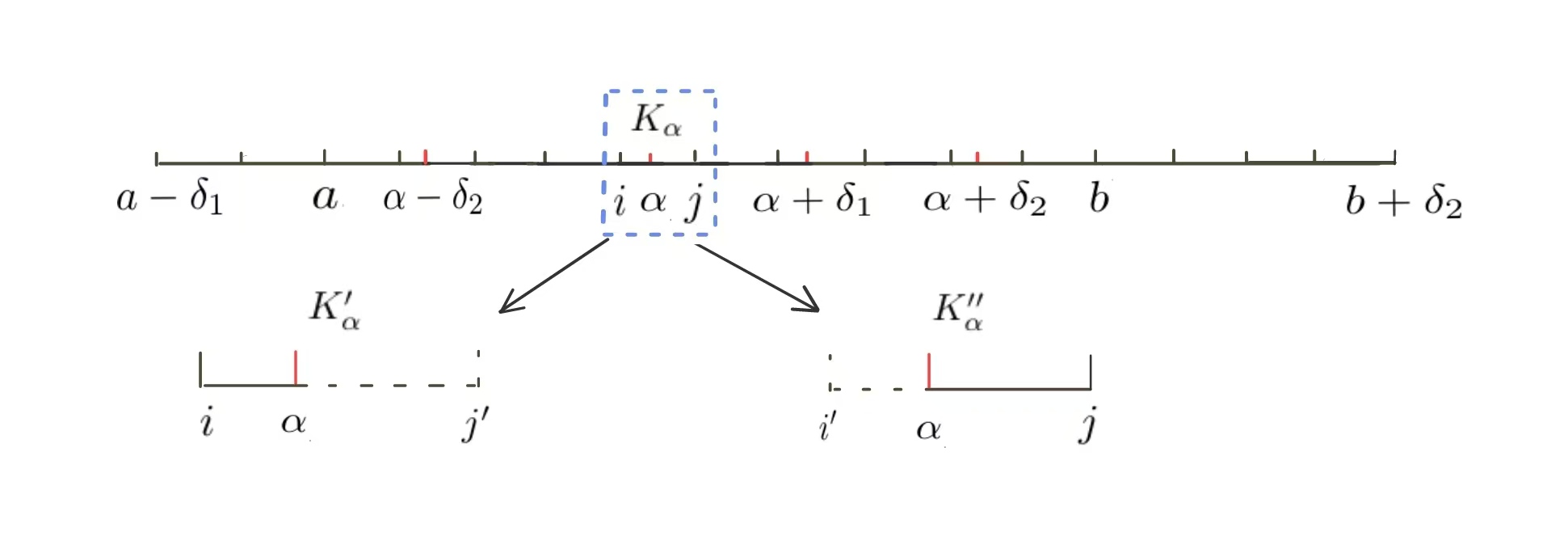}}
		     \vspace{-0.8cm}
		\caption{Schematic of the non-interface-conforming uniform mesh}
		\label{uniform-mesh}
	\end{figure}

For illustration, the lowest-order case is considered, using piecewise-linear basis functions.
The algorithm first identifies the element $K_{\alpha}$ intersected by the local interface $\Gamma_0$. This element is then subdivided into a left sub-interval $K'_{\alpha}$ and a right sub-interval $K''_{\alpha}$.  On $K'_{\alpha}$, a virtual nodes $j'$ is introduced on the far side of $\Gamma_0$, while on $K''_{\alpha}$, a virtual nodes $i'$ is placed on the opposite  side (see Figure~\ref{uniform-mesh}). 
These nodes serve as algebraic placeholders:  they coincide geometrically with $\Gamma_0$, but are not conventional finite element vertices.
Their purpose is to decouple the discrete function spaces on $\Omega_1$ and $\Omega_2$ while preserving the physical geometry.
A consistency check guarantees that no spurious duplication occurs at element boundaries.
Upon completion, two independent meshes are obtained: one for $\Omega_1$ and another for $\Omega_2$, each containing interface-adjacent elements that are geometrically coincident but topologically distinct.

For the finite element discretization, the solution $u^h$ and test functions $v^h$ are  expanded in the piece-wise linear basis functions $\{\phi_m\}_{m=1}^{M_d}$, where the index set $M_h$ includes all physical and virtual degrees of freedom.
Substitution into the weak formulation \eqref{DG-formulation} yields the linear system
\[
\mathbb{A} \mathbf{U}_h = \mathbf{b},
\]
where $\mathbf{U}_h$ is the vector of nodal unknowns, $\mathbb{A}$ is the global stiffness matrix, and $\mathbf{b}$ is  the load vector.
To streamline assembly,  the matrix is decomposed additively as
\[
\mathbb{A} = \mathbb{A}_{\Omega} + \mathbb{A}_{\Gamma}. 
\]
Here,  $\mathbb{A}_{\Omega}$ collects standard nonlocal element contributions over $\mathcal{T}^h_i$, while $\mathbb{A}_{\Gamma}$ encodes the interface terms integrated over $\mathcal{T}_{\Gamma}^h \cup \mathcal{T}_{\Omega_2, J}^h$. In one dimension, the computation of  $\mathbb{A}_{\Gamma}$ is exact and efficient: a simple affine mapping to the reference element reduces all interface integrals to closed-form expressions. Although the virtual nodes are essential for embedding the jump conditions directly into the weak form, they are purely algebraic artifacts and do not belong to the physical domain $\Omega$.
This approach captures discontinuities across the interface without body-fitted meshing, underscoring the method's robustness for nonlocal interface problems.

	\subsection{Error estimate for the unfitted FEM}	
	A priori error estimates are now established for the unfitted finite element discretization of nonlocal interface problems, quantifying  approximation errors in both energy and $L^2$ norms.
The analysis relies on two key lemmas, whose proofs are provided in Appendix \ref{app:norm_proof}.
\begin{lemma}
	\label{lem:energy_bound}
		Suppose the kernel family $\{\gamma_{\delta_i}\}$ satisfies \eqref{kernel_properties},  \eqref{kernel_scaling} and \eqref{kernel_bound}. Then there exists a  constant $C>0$ such that for all piecewise continuous bounded functions $v=(v_1,v_2)$ with $v_i \in C_b(\Omega_i \cup \mathcal{I}_i^D)$, the following bound holds
		\[
		\|v\|_\delta \leq C \max\{\delta_1^{-1}, \delta_2^{-1}\} \left(\|v\|_{\infty, \Omega_1\cup\Omega_2} + \|v\|_{\infty, \mathcal{I}_1^D\cup\mathcal{I}_2^D}\right).
		\]
	\end{lemma}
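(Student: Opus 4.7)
The plan is to bound each of the three contributions to $\|v\|_\delta^2=\|v\|_{\delta_1}^2+\|v\|_{\delta_2}^2+\|v\|_\Gamma^2$ separately using only the crude pointwise bound on the difference $v_i(x)-v_i(y)$ together with the kernel uniform bound \eqref{kernel_bound}. No cancellation or regularity is needed: the worst-case estimate $(v_i(x)-v_i(y))^2\le 4\,\|v_i\|_{\infty,\Omega_i\cup\mathcal{I}_i^D}^2$ is already of the correct order, because $\gamma_{\delta_i}$ is compactly supported in $|x-y|<\delta_i$ and its first moment in $y$ is controlled by $G(\gamma_{\delta_i})\lesssim\delta_i^{-2}$.

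For the bulk terms $\|v\|_{\delta_i}^2$, I pull $4\|v_i\|_\infty^2$ out of the double integral and use Fubini together with \eqref{kernel_bound}:
\[
\int_{\Omega_i\cup\mathcal{I}_i^D}\!\int_{\Omega_i\cup\mathcal{I}_i^D}\gamma_{\delta_i}(x,y)\,dy\,dx \le |\Omega_i\cup\mathcal{I}_i^D|\,G(\gamma_{\delta_i}) \lesssim \delta_i^{-2},
\]
since $|\Omega_i\cup\mathcal{I}_i^D|\le|\Omega|+\delta_i$ is bounded independently of $\delta_i$. This gives $\|v\|_{\delta_i}\le C\,\delta_i^{-1}\|v_i\|_{\infty,\Omega_i\cup\mathcal{I}_i^D}$.

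For the interface term $\|v\|_\Gamma^2$ I treat its three pieces in the same way. The typical factor is $|\mathcal{I}_j^J|\,G(\gamma_{\delta_i})\lesssim \delta_j\,\delta_i^{-2}$ for the cross terms, and $|\mathcal{I}_2^J|\,G(\gamma_{\delta_2})\lesssim \delta_2^{-1}$ for the $\Omega_2^J$ piece. Since the horizons are bounded above by $|\Omega|$, each of these is bounded by $C\max\{\delta_1^{-2},\delta_2^{-2}\}$, so $\|v\|_\Gamma\le C\max\{\delta_1^{-1},\delta_2^{-1}\}\,\max_i\|v_i\|_{\infty,\Omega_i\cup\mathcal{I}_i^D}$. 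Finally, bounding $\|v_i\|_{\infty,\Omega_i\cup\mathcal{I}_i^D}\le \|v\|_{\infty,\Omega_1\cup\Omega_2}+\|v\|_{\infty,\mathcal{I}_1^D\cup\mathcal{I}_2^D}$ and summing the three squared contributions yields the claim.

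The only subtle point — hardly an obstacle, but worth keeping track of — is the interface piece, where a naive reading of the $1/\delta^2$ kernel bound might suggest a worse exponent than in the bulk terms. The saving comes from the fact that the outer integration domains $\mathcal{I}_i^J$, $\mathcal{I}_2^J$ have measure of order $\delta_i$ (or $\delta_2$), which absorbs one power of $\delta_i^{-1}$ and puts the interface contribution under the same $\max\{\delta_1^{-1},\delta_2^{-1}\}$ envelope as the bulk terms. After this bookkeeping, taking square roots and combining gives exactly the stated inequality.
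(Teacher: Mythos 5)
Your proposal is correct and follows essentially the same route as the paper's proof: bound the squared differences by sup norms (the paper uses $(v_i(x)-v_i(y))^2\le 2(v_i^2(x)+v_i^2(y))$, you use the equivalent crude bound $4\|v_i\|_\infty^2$), apply the kernel mass bound \eqref{kernel_bound} via Fubini, and note that the interface integrals gain a factor from the $\mathcal{O}(\delta)$ measure of the outer domains $\mathcal{I}_i^J$, $\mathcal{I}_2^J$. Your bookkeeping of the $\Gamma$-term matches the paper's, so no further comment is needed.
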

	\begin{lemma}
	\label{lem:poincare}
		Under the same kernel assumptions,  there exist constants $C_1, C_2 > 0$, depending only on $\Omega$ and $\gamma_i$, such that for all $v \in V^0 = \{v \in L^2(\Omega_1 \cup \Omega_2) : v|_{\mathcal{I}_i^D} = 0\}$, 
		\begin{equation}\label{L^2}
			C_1\|v\|_{L^2(\Omega_1\cup\Omega_2)} \leq \|v\|_\delta \leq C_2\max\{\delta_1^{-1},\delta_2^{-1}\}\|v\|_{L^2(\Omega_1\cup\Omega_2)}.
		\end{equation}
	\end{lemma}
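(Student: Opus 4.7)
The plan is to handle the two inequalities separately. The upper bound is essentially a direct consequence of the kernel mass bound in \eqref{kernel_bound}, while the lower bound is a nonlocal Poincaré inequality that leverages the Dirichlet condition $v|_{\mathcal{I}_i^D}=0$. Since the $L^2$ norm on the right-hand side is restricted to $\Omega_1\cup\Omega_2$ and the extra interface coupling term $\|v\|_\Gamma^2$ appears in $\|v\|_\delta^2$, one can freely discard $\|v\|_\Gamma^2$ for the lower bound and reduce the question to a subdomain-wise Poincaré estimate.

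For the upper bound, I would apply the elementary inequality $(v_i(x)-v_i(y))^2 \leq 2v_i(x)^2 + 2v_i(y)^2$ in each of the three double integrals that compose $\|v\|_\delta^2$. By symmetry of $\gamma_{\delta_i}$, each of the two resulting terms is bounded by $\int v_i(x)^2 \bigl(\int \gamma_{\delta_i}(x,y)\,dy\bigr)dx$, and the inner integral is controlled by $G(\gamma_{\delta_i})\lesssim \delta_i^{-2}$. Summing the contributions from $\|v\|_{\delta_i}^2$ ($i=1,2$) and the interface coupling $\|v\|_\Gamma^2$, and using the fact that $v_i$ appears with $L^2$ norm over $\Omega_i\cup\mathcal{I}_i^D$ while $v|_{\mathcal{I}_i^D}=0$ for $v\in V^0$, yields the upper bound $\|v\|_\delta \leq C_2\max\{\delta_1^{-1},\delta_2^{-1}\}\|v\|_{L^2(\Omega_1\cup\Omega_2)}$.

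For the lower bound, the strategy is to prove a subdomain-wise uniform nonlocal Poincaré inequality, namely $\|v_i\|_{L^2(\Omega_i)}^2 \leq C\|v\|_{\delta_i}^2$ with $C$ independent of $\delta_i$. Since $v_i$ vanishes on the full $\delta_i$-width Dirichlet strip $\mathcal{I}_i^D$, this fits the framework of uniform Poincaré inequalities for nonlocal diffusion established in the literature on nonlocal vector calculus (for instance the Du--Gunzburger--Lehoucq--Zhou theory and its 1D specializations that underlie the analysis in \cite{MR3231986,du2019uniform}). The argument can be phrased directly in 1D by a chaining/telescoping representation $v_i(x) = \sum_j [v_i(x_j)-v_i(x_{j+1})]$ along a sequence of points with $|x_j - x_{j+1}|\lesssim\delta_i$ connecting $x\in\Omega_i$ to the boundary strip where $v_i\equiv 0$, followed by Cauchy--Schwarz weighted by $\gamma_{\delta_i}$ and the length of $\Omega_i$. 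Adding the resulting inequalities for $i=1,2$ and using $\|v\|_\delta^2 \geq \|v\|_{\delta_1}^2+\|v\|_{\delta_2}^2$ gives the left inequality.

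The main obstacle will be the lower bound, and specifically producing a Poincaré constant $C_1$ that does not degenerate as $\delta_1,\delta_2\to 0$. The upper bound is essentially algebraic, but the lower bound requires the quantitative chaining argument (or, alternatively, a compactness plus contradiction argument combined with the local limit from Section \ref{sec;nonlocal-local}) to certify $\delta$-independence. Carefully accounting for the fact that the chain length needed to reach $\mathcal{I}_i^D$ from $x\in\Omega_i$ scales like $|\Omega_i|/\delta_i$ produces a factor of $\delta_i$ in the Cauchy--Schwarz step that exactly cancels the $\delta_i^{-1}$ from the kernel weight, yielding a constant depending only on $|\Omega|$ and the fixed kernel profiles $\gamma_i$, as claimed.
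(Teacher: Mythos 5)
Your proposal is correct and follows essentially the same route as the paper: the upper bound via $(v(x)-v(y))^2\le 2v(x)^2+2v(y)^2$ together with the kernel mass bound \eqref{kernel_bound}, and the lower bound by discarding the interface coupling term and invoking a $\delta$-independent nonlocal Poincar\'e inequality on each subdomain (the paper simply cites Theorem 3.34 of \cite{MR3938295}, with $C_{p,i}=|\Omega_i|^2/(2\sigma_i)$, where you sketch the underlying chaining argument). The only difference is that you re-derive what the paper cites, which is fine.
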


The main convergence result for the unfitted FEM is now presented.

	\begin{theorem}
	\label{thm:FEM_error} 
	Under the  assumptions of Theorem \ref{thm:nonlocal_to_local}, 
	if $\tilde{u}_0$ is a $C^{\max(4,k+1)}_b(\Omega_1 \cup \Omega_2)$ extension of $u_0$, and $u^h$ is the finite element solution in $V^h$ with $k$-th degree Lagrange  interpolation of boundary data $g_i^h$, then there exists a constant $C>0$, independent of $\delta_1,\delta_2$ and $h$, such that
	\begin{eqnarray}
		\|u - u^h\|_\delta \leq C \max\{\delta_1^{-1}, \delta_2^{-1}\} h^{k+1}, \label{error-delta}\\
		\|u - u^h\|_{L^2(\Omega_1 \cup \Omega_2)} \leq C \max\{\delta_1^{-1}, \delta_2^{-1}\} h^{k+1}.\label{error-l2}
	\end{eqnarray}
	\end{theorem}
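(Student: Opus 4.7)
The plan is to prove the energy-norm estimate \eqref{error-delta} by a Céa-type argument that exploits the fact that the bilinear form $a(\cdot,\cdot)$ coincides with the inner product inducing $\|\cdot\|_\delta$. Coercivity (with constant one) and continuity (by Cauchy--Schwarz) are therefore immediate, as is Galerkin orthogonality $a(u-u^h,v^h)=0$ for all $v^h\in V^{0,h}$. Because the boundary datum $g_i$ prescribed in Theorem \ref{thm:nonlocal_to_local} is affine in $x$, a degree-$k$ Lagrange interpolant with $k\ge 1$ reproduces it exactly, so $u^h|_{\mathcal I^D}=g_i^h=g_i=u|_{\mathcal I^D}$ and $u-u^h$ belongs to $V^0$. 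Consequently the $L^2$ bound \eqref{error-l2} will follow at once from the lower Poincaré inequality of Lemma \ref{lem:poincare} as soon as \eqref{error-delta} is established.

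For \eqref{error-delta} I would split the error through the piecewise degree-$k$ Lagrange interpolant $\pi_h\tilde u_0\in V^h$ of the smooth extension,
$u-u^h=(u-\tilde u_0)+(\tilde u_0-\pi_h\tilde u_0)+(\pi_h\tilde u_0-u^h).$
Theorem \ref{thm:nonlocal_to_local} bounds the first piece by $C\max(\delta_1^2,\delta_2^2)$ in $L^\infty$ on $\Omega_1\cup\Omega_2$, and the Taylor construction of $\tilde u_0$ gives the same order on $\mathcal I_i^D$; Lemma \ref{lem:energy_bound} then converts this to $C\max\{\delta_i^{-1}\}\max(\delta_j^2)$ in the energy norm. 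Classical one-dimensional polynomial interpolation yields $\|\tilde u_0-\pi_h\tilde u_0\|_\infty\le Ch^{k+1}\|\tilde u_0\|_{C_b^{k+1}}$, which Lemma \ref{lem:energy_bound} raises to $C\max\{\delta_i^{-1}\}h^{k+1}$. For the third piece, setting $e^h:=\pi_h\tilde u_0-u^h$ and arranging that $e^h\in V^{0,h}$ (see below), Galerkin orthogonality and coercivity give $\|e^h\|_\delta^2=a(\pi_h\tilde u_0-u,e^h)\le\|\pi_h\tilde u_0-u\|_\delta\,\|e^h\|_\delta$, so $\|e^h\|_\delta$ is already controlled by the previous two pieces. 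Summing, and absorbing the $\delta$-only contribution into $\max\{\delta_i^{-1}\}h^{k+1}$ in the asymptotic-compatibility regime $\delta\lesssim h$, yields \eqref{error-delta}; Lemma \ref{lem:poincare} then gives \eqref{error-l2}.

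The main obstacle is the reconciliation of boundary data between $\pi_h\tilde u_0|_{\mathcal I^D}$ (the interpolant of the fourth-order Taylor extension) and $u^h|_{\mathcal I^D}=g_i^h$ (the interpolant of the affine $g_i$); these agree only up to $O(\max\{\delta_i^2\})$, so the naïve difference is not in $V^{0,h}$ and pure Galerkin orthogonality fails. I would resolve this by replacing $\pi_h\tilde u_0$ on $\mathcal I^D$ with $g_i^h$, at a cost already controlled by Theorem \ref{thm:nonlocal_to_local} combined with Lemma \ref{lem:energy_bound}. A secondary but delicate point is the treatment of the coupling terms integrated over $\mathcal I_i^J$, $\mathcal I_j^J$, and $\Omega_2^J$ that constitute $\|\cdot\|_\Gamma$: each must be bounded term-by-term by Cauchy--Schwarz against the corresponding factor of the energy norm, particularly in the interface-intersecting elements of $\mathcal T_\Gamma^h$ where $\pi_h\tilde u_0$ is built from the smooth extension rather than from the (possibly discontinuous) exact solution, and one must verify that the virtual-node construction used in the implementation does not introduce spurious cross-subdomain contributions that escape the energy-norm bound.
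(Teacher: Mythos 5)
Your skeleton is the same C\'ea-type argument the paper uses: exploit that $a(v,v)=\|v\|_\delta^2$ (coercivity constant one, continuity by Cauchy--Schwarz), reduce via Galerkin orthogonality to a best-approximation bound, take the degree-$k$ interpolant of the smooth local extension $\tilde u_0$ as the comparison function, and convert $L^\infty$ bounds to energy-norm bounds with Lemma \ref{lem:energy_bound}, then get the $L^2$ bound from Lemma \ref{lem:poincare}. The genuine difference is that you dispense with the paper's intermediate projection $u^*$, which is \emph{defined} as an element of $V^h$ solving the strong nonlocal equations with interpolated boundary data and is estimated through the maximum principle (Theorems \ref{th1}--\ref{Priori-non}); you instead observe that the boundary datum $g_i$ of Theorem \ref{thm:nonlocal_to_local} is affine, hence reproduced exactly by the degree-$k$ interpolant, so that $u-u^h\in V^0$ and orthogonality can be applied directly to $u-u^h$. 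This is arguably cleaner (the paper's $u^*$ is over-determined: a piecewise polynomial cannot in general satisfy $\mathcal L_i u_i^*=f_i$ pointwise), and it lets you apply Lemma \ref{lem:poincare} to $u-u^h$ in one step rather than through a triangle inequality. What you lose is the maximum-principle bound $\|u-u^*\|_{\infty}\le C_I h^{k+1}$, which in the paper is the only piece of the error that is $O(h^{k+1})$ \emph{without} an accompanying $O(\max\delta_i)$ modeling term; your route forces every contribution through $\tilde u_0$.

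Two points need more care. First, the identity $a(u-u^h,v^h)=0$ for all $v^h\in V^{0,h}$ is not automatic: $V^{0,h}$ contains test functions with $v_1\neq v_2$ on $\Gamma$, so $V^{0,h}\not\subset V_\delta$ and the method is nonconforming. You must verify that the exact solution of \eqref{non-interface-eq}--\eqref{non-interface-jump} satisfies the Nitsche-type identity $a(u_1,u_2,v_1^h,v_2^h)=l(v_1^h,v_2^h)$ for such discontinuous test functions (this is exactly where the flux condition $\mathcal F(u_1,u_2)=\psi$ enters the right-hand side $l$); the paper glosses over the same point for $u^*-u^h$, but your version makes the consistency claim for $u$ itself and should state it. Second, the absorption step: the leftover term is $\|u-\tilde u_0\|_\delta\lesssim\max\{\delta_i^{-1}\}\max\{\delta_j^2\}\sim\max\{\delta_j\}$, and absorbing it into $\max\{\delta_i^{-1}\}h^{k+1}$ requires $\max\{\delta_j^2\}\lesssim h^{k+1}$. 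Your stated regime $\delta\lesssim h$ gives this only for $k=1$ (and note it is the opposite of the paper's stated regime $h\le\min\{\delta_1,\delta_2\}$, which does not give it either for $k\ge 2$). This gap is inherited from the paper rather than introduced by you, but as written your "absorbing" sentence does not close it for general $k$.
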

	\begin{proof}
	Let \(u^*=(u_1^*, u_2^*) \in V^h\)  be a projection of the exact nonlocal solution $u$ onto the finite element space, satisfying
	\begin{equation}\label{m-eq}
		\mathcal{L}_i u_i^* = f_i, \;\text{ in }\; \Omega_i, \quad u_i^* = g_i^h,\; \text{ on }\; \mathcal{I}_i^D,
	\end{equation}
	and the jump conditions 
	\begin{equation}\label{m-jump}
		\llbracket u^* \rrbracket = 0,  \quad \mathcal{F}(u_1^*, u_2^*)=\psi(x), \;\text{ on } \Gamma.
	\end{equation}
	From \eqref{non-interface-eq} and \eqref{m-eq},  the residual satisfies
	\[\mathcal{L}_{i}(u_i-u_{i}^{*})=0, \;x\in\Omega_{i}.\]
	The interface conditions \eqref{non-interface-jump} and \eqref{m-jump} imply
	\[\mathcal{F}(u_i-u_{i}^{*}, u_i-u_{i}^{*})=0, \;{\rm in} \;\Gamma.\]
	Applying the nonlocal maximum principle (Theorem \ref{th1}) yields
	\begin{equation}
	\label{I-error}
		\begin{split}
			\|u - u^{*}\|_{\infty,\Omega_1\cup\Omega_2}&\leq\|u-u^{*}\|_{\infty,\mathcal{I}_1^D\cup\mathcal{I}_2^D}+C\|\mathcal{L}(u-u^*)\|_{\infty,\Omega_1\cup\Omega_2} + \|\mathcal{F}(u_{1}-u_{1}^{*}, u_{2}-u_{2}^{*})\|_{\infty,\Gamma}\\
			&\leq C_{I}h^{k+1}\|u\|_{C^{k+1}_b(\mathcal{I}_1^D\cup\mathcal{I}_2^D)}.
		\end{split}
	\end{equation}
	Lemma \ref{lem:energy_bound} then gives 
	\begin{align*}
		\|u - u^*\|_{\delta} & \leq C \max\{\delta_1^{-1}, \delta_2^{-1}\}h^{k+1}\|u\|_{C^{k+1}_b(\mathcal{I}_1^D\cup\mathcal{I}_2^D)}.
	\end{align*}
	Now consider the discrete error $u^* - u^h$.   By  Galerkin orthogonality of \eqref{DG-formulation}, 
	 \begin{equation}
	 \label{orthogonality}
	 a(u^*-u^h, v^h)=0,\quad\forall \;v^h\in V^{0,h}.
	 \end{equation}
	  Let \(I_h \tilde{u}_0\) be the piecewise $k$-degree polynomial interpolation of  $\tilde{u}_0$. Standard interpolation theory gives 
	 \[\|\tilde{u}_0 - I_h\tilde{u}_0\|_{\delta} \leq C\max\{\delta_1^{-1}, \delta_2^{-1}\}\|\tilde{u}_0 - I_h\tilde{u}_0\|_{\infty, \Omega_1\cup\Omega_2} \leq C\max\{\delta_1^{-1}, \delta_2^{-1}\}\|\tilde{u}_0\|_{C_b^{k+1}(\Omega_1\cup\Omega_2)}.\]
	Choosing $v^h = I_h \tilde{u}_0 - u^h$ in \eqref{orthogonality} and  using the coercivity and continuity of $a(\cdot, \cdot)$ leads to
	\[
	\|u^* - u^h\|_\delta  \leq \|u^* - I_h \tilde{u}_0\|_{\delta}.
	\]
By the triangle inequality and Lemma \ref{lem:energy_bound},
	\begin{align*}
		\| u^* -  I_h \tilde{u}_0 \|_\delta  &\leq \| u^* - u \|_{\delta}  + \| u - \tilde{u}_0 \|_{\delta} + \| \tilde{u}_0 - I_h \tilde{u}_0 \|_\delta\\
		& \leq C \max \{ \delta_1^{-1}, \delta_2^{-1} \} h^{k+1}+ C\max\{\delta_1, \delta_2\}. 
	\end{align*}
For $h\leq \min\{\delta_1, \delta_2\}$, the terms involving $h^{k+1}$ dominate, yielding 
\begin{align*}
		\| u^* -  u^h \|_\delta  \leq C \max \{ \delta_1^{-1}, \delta_2^{-1} \} h^{k+1}. 
	\end{align*}
Thus,
	\[\|u-u^h\|_{\delta}\leq \|u-u^*\|_{\delta} +\|u^*-u^h\|_{\delta} \leq C \max \{ \delta_1^{-1}, \delta_2^{-1} \} h^{k+1},
	\]
	which proves \eqref{error-delta}. 
	For the $L^2$ estimate, since $u^* - u^h \in V^0$, Lemma \ref{lem:poincare} gives
	\[
	\| u^* - u^h \|_{L^2(\Omega_1 \cup \Omega_2)} \lesssim \| u^* - u^h \|_\delta \lesssim \max \{ \delta_1^{-1}, \delta_2^{-1} \} h^{k+1}.
	\]
	Applying the triangle inequality again yields
	\[
	\|u-u^h\|_{L^2(\Omega_1 \cup \Omega_2)} \leq \|u-u^*\|_{L^2(\Omega_1 \cup \Omega_2)} + \|u^*-u^h\|_{L^2(\Omega_1 \cup \Omega_2)} \leq C \max \{ \delta_1^{-1}, \delta_2^{-1} \} h^{k+1},
	\]
	which completes  the proof of   \eqref{error-l2}.
	\end{proof}
	
\begin{remark}
	The convergence rates stated in \eqref{error-l2} may be suboptimal, as they rely on the generic inequality \eqref{L^2}.  However, numerical experiments in Section~\ref{sec;num-ex} indicate that the discrete $L_2$ error converges one order higher than the energy error,  suggesting that the theoretical bounds can be sharpened. The current estimates are therefore regarded as a baseline.
\end{remark}

	\subsection{Error estimates between nonlocal discrete solutions and local exact solution}
	Combining the results of Sections \ref{sec;nonlocal-local} and \ref{sec;unfitted-FEM}, we establish the convergence of the nonlocal finite element  solutions to the local limit solution. 
	\begin{theorem}
	\label{thm:discrete_to_local}
Suppose the kernel family $\{\gamma_{\delta_i}\}$ satisfies  \eqref{kernel_properties}, \eqref{kernel_scaling} and \eqref{kernel_bound}. Let $C^{\max(4,k+1)}_b(\Omega_1 \cup \Omega_2)$ be the solution to the local interface problem \eqref{interface-eq}--\eqref{interface-jump}, and let $\tilde{u}_0$ be a $C^4$ extension of $u_0$ to $\Omega \cup \mathcal{I}^D$  such that $\tilde{u}_0|_{\mathcal{I}_i^D} = g_i$. Then for the unfitted FEM approximation $u^h$, there exists constant $C > 0$, independent of $\delta_1, \delta_2$ and $h$, such that
\begin{align}
\|\tilde{u}_0 - u^h\|_\delta &\leq C \max\{\delta_1^{-1}, \delta_2^{-1}\} h^{k+1}, \label{eq:energy_error_local} \\
\|\tilde{u}_0 - u^h\|_{L^2(\Omega_1 \cup \Omega_2)} &\leq C \max\{\delta_1^{-1}, \delta_2^{-1}\} h^{k+1}. \label{eq:L2_error_local}
\end{align}
\end{theorem}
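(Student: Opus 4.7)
The strategy is to interpose the exact nonlocal solution $u$ between $\tilde{u}_0$ and $u^h$ and exploit the two convergence results already established: Theorem \ref{thm:nonlocal_to_local} (nonlocal-to-local in the maximum norm) to control $\|\tilde{u}_0 - u\|$ and Theorem \ref{thm:FEM_error} (unfitted FEM error) to control $\|u - u^h\|$. Concretely, the triangle inequality gives
$$
\|\tilde{u}_0 - u^h\|_\delta \leq \|\tilde{u}_0 - u\|_\delta + \|u - u^h\|_\delta,
$$
and analogously for the $L^2$ norm; the second term on the right is bounded immediately by Theorem \ref{thm:FEM_error}.

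To handle the first term in the energy norm, I would apply Lemma \ref{lem:energy_bound} to reduce $\|\tilde{u}_0 - u\|_\delta$ to an $L^\infty$ bound on $\Omega_1\cup\Omega_2$ plus a trace bound on $\mathcal{I}^D$. Since $\tilde{u}_0 \equiv u_0$ on $\Omega_1\cup\Omega_2$, the interior contribution equals $\|u_0 - u\|_{\infty,\Omega_1\cup\Omega_2}$, which Theorem \ref{thm:nonlocal_to_local} bounds by $C\max(\delta_1^2,\delta_2^2)$. Because the hypothesis $\tilde{u}_0|_{\mathcal{I}_i^D} = g_i$ coincides with the nonlocal Dirichlet data $u_i|_{\mathcal{I}_i^D} = g_i$, the trace contribution vanishes, so
$$
\|\tilde{u}_0 - u\|_\delta \leq C\max\{\delta_1^{-1},\delta_2^{-1}\}\,\max(\delta_1^2,\delta_2^2) \leq C\max\{\delta_1,\delta_2\}.
$$
For the $L^2$ estimate, I would instead bound $\|\tilde{u}_0 - u\|_{L^2(\Omega_1\cup\Omega_2)}$ directly from the $L^\infty$ estimate of Theorem \ref{thm:nonlocal_to_local} together with $|\Omega|<\infty$, yielding $C\max(\delta_1^2,\delta_2^2)$, while $\|u - u^h\|_{L^2}$ is handled by Theorem \ref{thm:FEM_error}.

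Combining the two contributions, both target estimates \eqref{eq:energy_error_local}--\eqref{eq:L2_error_local} follow in the asymptotic regime $h\leq\min\{\delta_1,\delta_2\}$, in which the $h^{k+1}$ term dominates the residual $O(\max\{\delta_1,\delta_2\})$, exactly as in the final step of the proof of Theorem \ref{thm:FEM_error}. The only conceptual obstacle is precisely this matching of scales: the nonlocal-to-local consistency error scales like $\delta^2$ while the target rate carries the negative power $\max\{\delta_1^{-1},\delta_2^{-1}\}$, so the absorption step implicitly requires the asymptotic-compatibility regime where $h$ and $\delta$ are comparable (or a fixed upper bound on the horizons). No genuinely new ingredients are needed beyond those assembled in Sections \ref{sec;nonlocal-local} and \ref{sec;unfitted-FEM}; the theorem is essentially a direct corollary of Theorems \ref{thm:nonlocal_to_local} and \ref{thm:FEM_error}, glued together by Lemmas \ref{lem:energy_bound} and \ref{lem:poincare}.
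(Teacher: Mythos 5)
Your proposal follows essentially the same route as the paper's proof: interpose the exact nonlocal solution $u$ via the triangle inequality, bound $\|\tilde{u}_0 - u\|_\delta$ through Lemma \ref{lem:energy_bound} combined with Theorem \ref{thm:nonlocal_to_local} to obtain $O(\max\{\delta_1,\delta_2\})$, invoke Theorem \ref{thm:FEM_error} for $\|u - u^h\|_\delta$, and handle the $L^2$ case with Lemma \ref{lem:poincare} and the fact that $\tilde{u}_0 = u_0$ on $\Omega_1\cup\Omega_2$. Your closing caveat about the scale matching required to absorb the $O(\max\{\delta_1,\delta_2\})$ term into $C\max\{\delta_1^{-1},\delta_2^{-1}\}h^{k+1}$ is in fact more candid than the paper, which simply asserts that the $h^{k+1}$ term dominates for $h\le\min\{\delta_1,\delta_2\}$.
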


\begin{proof}
Let $u$ denote the exact  nonlocal solution to \eqref{non-interface-eq}--\eqref{non-interface-jump}. Splitting the error gives 
    \begin{equation}\label{eq:error_split}
    \|\tilde{u}_0 - u^h\|_\delta \leq \|\tilde{u}_0 - u\|_\delta + \|u - u^h\|_\delta. 
    \end{equation}
 From Theorem \ref{thm:nonlocal_to_local} and Lemma \ref{lem:energy_bound},
     \begin{equation}
    \|\tilde{u}_0 - u\|_\delta \leq C \max\{\delta_1^{-1}, \delta_2^{-1}\} \|\tilde{u}_0 - u\|_{\infty} \leq C \max\{\delta_1, \delta_2\}.
    \end{equation}
    Theorem \ref{thm:FEM_error} yields
    \begin{equation}
    \|u - u^h\|_\delta \leq C \max\{\delta_1^{-1}, \delta_2^{-1}\} h^{k+1}.
    \end{equation}
   For $h \leq \min(\delta_1, \delta_2)$, the discrete error dominates, yielding \eqref{eq:energy_error_local}. 
   The $L^2$ estimate follows similarly using Lemma \ref{lem:poincare} and noting $\|u_0 - \tilde{u}_0\|_{L^2} = 0$ on $\Omega$.
\qedhere
\end{proof}
	
	\section{Numerical examples}
	\label{sec;num-ex}
	This section presents numerical experiments to validate the theoretical predictions and to evaluate the performance of the unfitted finite-element method for nonlocal elliptic interface problems. Three representative configurations are examined: a solution continuous across the interface (Example~\ref{eg1}), a solution exhibiting an interfacial jump (Example~\ref{eg2}), and a multi-interface configuration (Example~\ref{eg3}).
The computed errors confirm the convergence rates asserted in Theorem~\ref{thm:discrete_to_local} and demonstrate that the method resolves solution discontinuities with high fidelity.

	\begin{example}\label{eg1}
		The domain is taken as  $\Omega=(0,1) $ with the interface point at $ \alpha =\pi/6$.
For nonlocal interaction horizons parameters  $\delta_1$ and $\delta_2$, the kernel functions are selected as
		$\gamma_{\delta_{i}}(x, y) =6\delta_i^{-4}(\delta_i-|x-y|),\; i=1,2.$
		The exact solutions $u_1(x)$ and $u_2(x)$ are constructed as
		$$
		\begin{array}{cc}
			u_1(x) = \begin{cases} 
				0 &  - \delta_1 < x \leq 0, \\
				\sin x & 0 < x \leq \alpha, \\
				3 / 2 - 2 \sin x & \alpha < x \leq \alpha + \delta_1 ,
			\end{cases}
			&
			u_2(x) = \begin{cases} 
				\sin x & \alpha - \delta_2 < x \leq \alpha ,\\
				3 / 2 - 2 \sin x & \alpha < x < 1, \\
				3 / 2 - 2 \sin 1 & 1 \leq x < 1 + \delta_2.
		\end{cases}
		\end{array}
		$$
	The solution is continuous across the interface, satisfying  $u_1(\alpha)=u_2(\alpha)$. 
		
	The left panel of Figure \ref{fig:eg}  presents the numerical solution obtained with polynomial degree $k=2$. The approximation agrees with the exact solution to graphical accuracy, including at the interface.
		 
Table \ref{tab-ex1-2} lists  the errors in $L^2$ and energy norms by fixed $\delta_i$ and varying $h$, showing optimal $k+1$-order convergence for piecewise polynomials of degree $k$.
 Table  \ref{tab-ex1-3} shows the $\delta$-convergence where $\delta = Mh$ is fixed while $h$ is refined. The results indicate $k$-order convergence in the energy norm and  $k+1$-order convergence in the $L^2$ norm, suggesting  potential for improved  theoretical estimates.
				
		\begin{table}[H]
			\centering
			\caption{ Error norms and convergence rates for  $\delta_1 = 1/4$ and $\delta_2 = 1/2$ }
			\begin{tabular}{|l|l@{\hspace{8pt}}l|l@{\hspace{8pt}}l|l@{\hspace{8pt}}l|}
				\hline
				\multicolumn{1}{|l|}{} & \multicolumn{2}{l|}{$k=1$} & \multicolumn{2}{l|}{$k=2$} & \multicolumn{2}{l|}{$k=3$} \\[8pt]
				\hline
				$h$ & $||u - u^h||_{\delta}$  & $||u-u^h||_{L^2}$ &   $||u - u^h||_{\delta}$  & $||u-u^h||_{L^2}$ &   $||u - u^h||_{\delta}$  & $||u-u^h||_{L^2}$ \\[8pt]
				\hline
				$2^{-2}$ &1.50e-02  & 5.00e-03 &5.43e-04 & 1.46e-04 &3.91e-06 & 2.15e-06  \\[8pt]
				\hline
				$2^{-3}$ &4.10e-03[1.87] &1.30e-03[1.94] &7.39e-05[2.88]&1.86e-05[2.97]&2.65e-07[3.89]&1.37e-07[3.96] \\[8pt]
				\hline
				$2^{-4}$ &1.10e-03[1.90]&3.16e-04[2.04]  &9.57e-06[2.95]&2.39e-06[2.96] &1.70e-08[3.96] &8.76e-09[3.97]\\[8pt]
				\hline
				$2^{-5}$ &2.82e-04[1.96]&7.94e-05[1.99]&1.23e-06[2.96]&3.06e-07[2.96]& 1.10e-09[3.96]&5.55e-10[3.98] \\[8pt]
				\hline
			\end{tabular}
			\label{tab-ex1-2}
		\end{table}	
		
		\begin{table}[H]  
			\centering  
			\caption{ Error norms and convergence rates for  $\delta_1 = 2h$ and $\delta_2 = 4h$}  
			\begin{tabular}{|l|l@{\hspace{8pt}}l|l@{\hspace{8pt}}l|l@{\hspace{8pt}}l|}
				\hline
				\multicolumn{1}{|l|}{} & \multicolumn{2}{l|}{$k=1$} & \multicolumn{2}{l|}{$k=2$} & \multicolumn{2}{l|}{$k=3$} \\[8pt]
				\hline
				$h$ & $||u - u^h||_{\delta}$  & $||u - u^h||_{L^2}$ &   $||u - u^h||_{\delta}$  & $||u - u^h||_{L^2}$ &   $||u - u^h||_{\delta}$  & $||u - u^h||_{L^2}$ \\[8pt]
				\hline
				$2^{-3}$ &4.10e-03 & 1.30e-03&7.39e-05 & 1.86e-05 & 2.65e-07 & 1.37e-07\\[8pt]
				\hline
				$2^{-4}$ &1.40e-03[1.55]&3.13e-04[2.05]  &1.89e-05[1.96]&2.38e-06[2.96]&3.17e-08[3.06]  & 8.72e-09[3.98]   \\[8pt]
				\hline
				$2^{-5}$ &5.12e-04[1.45] &7.84e-05[2.00] &4.84e-06[1.96]&3.05e-07[2.96] &3.93e-09[3.01] &5.54e-10[3.98]  \\[8pt]
				\hline
				$2^{-6}$ &1.77e-04[1.53] &1.96e-05[2.00] & 1.22e-06[1.99] &3.84e-08[2.99] & 4.85e-10[3.02] &3.46e-11[4.00]  \\[8pt]
				\hline
			\end{tabular}
			\label{tab-ex1-3}
		\end{table}
\end{example}

\begin{figure}[H]
	\centering
		\includegraphics[width=0.32\linewidth]{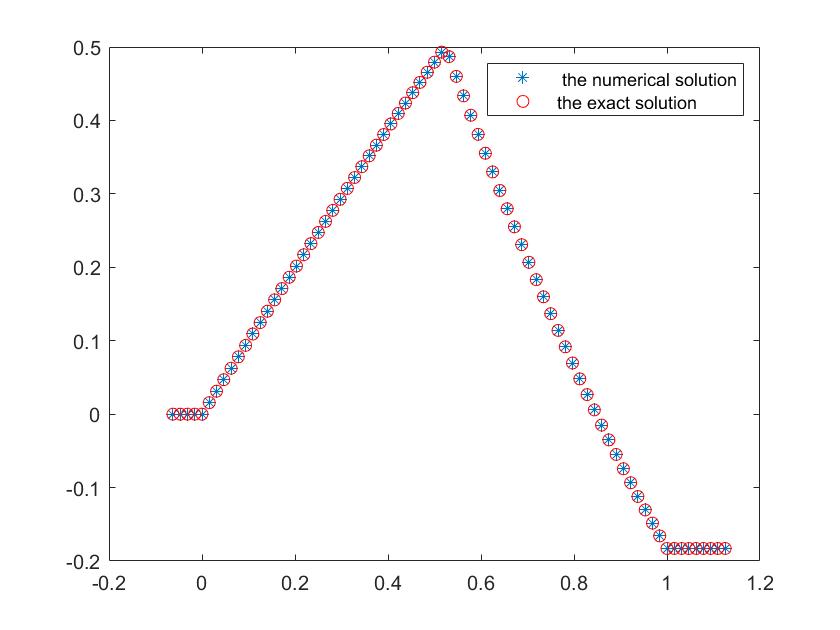} 
		\includegraphics[width=0.32\linewidth]{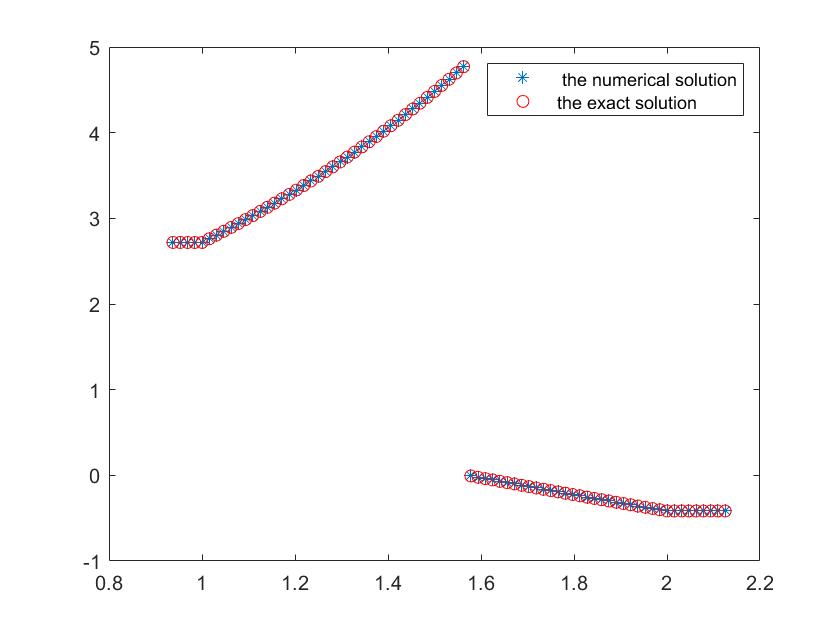}
		\includegraphics[width=0.32\linewidth]{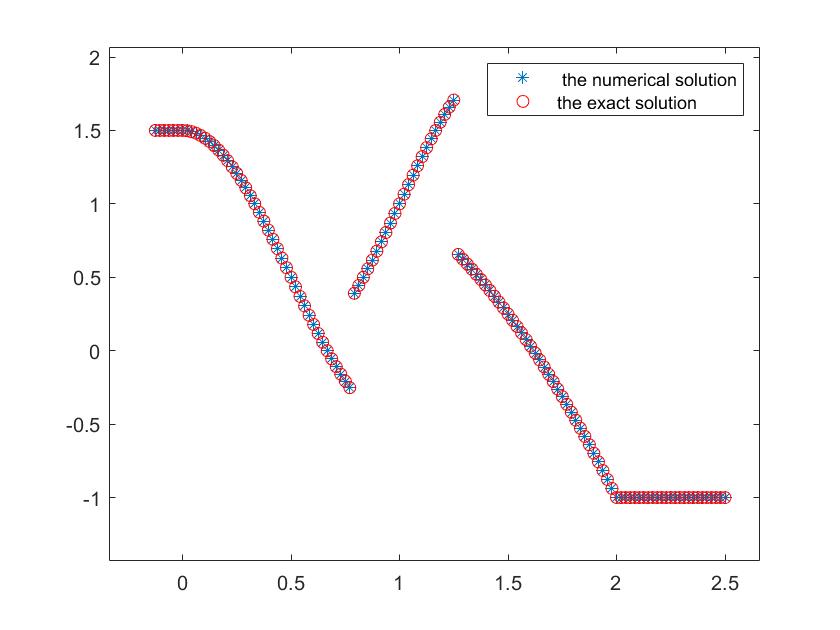}
		\caption{The numerical solution of Example \ref{eg1}-\ref{eg3}.}
		\label{fig:eg}
\end{figure}

	\begin{example}\label{eg2}
	This example tests the case with a solution discontinuity at the interface in $\Omega=(1,2) $ with $ \alpha =\pi/2$.  The kernel functions are  $\gamma_{\delta_{i}}=3\delta_i^{-3}/2, \;i=1,2$.
	The exact solutions with a jump at $\alpha$ are
	$$
	\begin{array}{cc}
		u_1(x) = \begin{cases} 
			e & 1 - \delta_1 < x \leq 1 \\
			e^x & 1 < x \leq \alpha \\
			\cos x & \alpha < x \leq \alpha + \delta_1 ,
		\end{cases}
		&
		u_2(x) = \begin{cases} 
			e^x & \alpha - \delta_2 < x \leq \alpha \\
			\cos x & \alpha < x < 2 \\
			\cos 2 & 2 \leq x < 2 + \delta_2 .
		\end{cases}
	\end{array}
	$$
	
	Tables \ref{tab-ex2-2} and \ref{tab-ex2-3} illustrate that the method accurately captures the jump condition, handling discontinuities without loss of accuracy. Similar to Example \ref{eg1},  the convergence rates match the theoretical expectations, with $L^2$ errors again exhibiting higher-order convergence.  Figure \ref{fig:eg} (middle) compares the numerical solutions for polynomial degrees $k = 2$ with the exact solution, confirming the method's sharp resolution of the discontinuity at the interface.

	\begin{table}[H]  
		\centering  
		\caption{ Error norms and convergence rates for $\delta_1 = 1/4$ and $\delta_2 = 1/2$}  
		\begin{tabular}{|l|l@{\hspace{8pt}}l|l@{\hspace{8pt}}l|l@{\hspace{8pt}}l|}
			\hline
			\multicolumn{1}{|l|}{} & \multicolumn{2}{l|}{$k=1$} & \multicolumn{2}{l|}{$k=2$} & \multicolumn{2}{l|}{$k=3$} \\[8pt]
			\hline
			$h$ & $||u - u^h||_{\delta}$  & $||u - u^h||_{L^2}$ &   $||u - u^h||_{\delta}$  & $||u - u^h||_{L^2}$ &   $||u - u^h||_{\delta}$  & $||u - u^h||_{L^2}$ \\[8pt]
			\hline
			$2^{-2}$ & 4.32e-02 & 1.33e-02 &  1.30e-03 & 3.47e-04 & 1.43e-05 & 6.07e-06  \\[8pt]
			\hline
		$2^{-3}$ & 1.04e-02[2.05] & 3.30e-03[2.01] & 1.65e-04[2.98]& 4.29e-05[3.02] & 8.94e-07[4.00] & 3.73e-07[4.03] \\[8pt]
			\hline
			$2^{-4}$ & 2.80e-03[1.89] & 9.13e-04[1.85] & 2.24e-05[2.88] & 5.85e-06[2.88]  & 6.05e--08[3.88] & 2.55e-08[3.87]\\[8pt]
			\hline
			$2^{-5}$ & 6.92e-04[2.02]  &  2.27e-04[2.01]& 2.80e-06[3.00]  &  7.27e-07[3.01]  & 3.79e-09[4.00]  &  1.58e-09[4.01] \\[8pt]
			\hline
		\end{tabular}
		\label{tab-ex2-2}
	\end{table}		
	
	\begin{table}[H]  
		\centering  
		\caption{ Error norms and convergence rates for $\delta_1 = 2h$ and $\delta_2 = 4h$}  
		\begin{tabular}{|l|l@{\hspace{8pt}}l|l@{\hspace{8pt}}l|l@{\hspace{8pt}}l|}
			\hline
			\multicolumn{1}{|l|}{} & \multicolumn{2}{l|}{$k=1$} & \multicolumn{2}{l|}{$k=2$} & \multicolumn{2}{l|}{$k=3$} \\[8pt]
			\hline
			$h$ & $||u - u^h||_{\delta}$  & $||u - u^h||_{L^2}$ &   $||u - u^h||_{\delta}$  & $||u - u^h||_{L^2}$ &   $||u - u^h||_{\delta}$  & $||u - u^h||_{L^2}$ \\[8pt]
			\hline
			$2^{-3}$ & 1.04e-02 &3.30e-03& 4.41e-08 & 4.29e-05 & 8.94e-07 & 3.73e-07\\[8pt]
			\hline
			$2^{-4}$ & 4.00e-03[1.38] & 9.04e-04[1.87] & 4.51e-05[1.87] & 5.85e-06[2.88] & 1.19e-07[2.92] & 2.55e-08[3.87]   \\[8pt]
			\hline
			$2^{-5}$ & 1.40e-03[1.51]  &  2.24e-04[2.01] & 1.13e-05[1.99] & 7.26e-07[3.01] & 1.47e-08[3.02] & 1.59e-09[4.00]  \\[8pt]
			\hline
			$2^{-6}$ & 4.98e-04[1.49]  & 5.61e-05[2.00] & 2.84e-06[2.00] &  9.03e-08[3.01] & 1.82e-09[3.01]  & 9.78e-11[4.02] \\[8pt]
			\hline
		\end{tabular}
		\label{tab-ex2-3}
	\end{table}
\end{example}

\begin{example}\label{eg3}
	The domain $\Omega=(0,2) $ is considered with interfaces at $ \alpha_1 =\pi / 4$ and $\alpha_2 = 2\pi/5$. For given $\delta_1,\delta_2, \delta_3$, the kernel functions are defined  as 
	\[
	\gamma_{\delta_{i}} =\dfrac{12}{\delta_i^5}(\delta_i^2-|x-y|^2),\qquad i = 1, 2, 3.\] 
	The exact solutions $u_1(x)$ , $u_2(x)$ and $ u_3(x) $ are designed to exhibit jumps at both interfaces
	$$
	\begin{array}{cc}
	u_1(x) = \begin{cases} 
		\frac{1}{2} &  - \delta_1 < x \leq 0, \\
		\frac{1}{2}\cos(\pi x) & 0 < x \leq \alpha_1 \\
		1-	\sin(\pi x) & \alpha_1 < x \leq \alpha_1 + \delta_1 ,
	\end{cases}\\
		u_2(x) = \begin{cases} 
			\frac{1}{2}\cos(\pi x) & \alpha_1 - \delta_2 < x \leq \alpha_1, \\
			1-	\sin(\pi x) & \alpha_1 < x \leq \alpha_2 \\
			x(1 - x)+1 & \alpha_2 < x < \alpha_2 + \delta_2 .
		\end{cases}
		u_3(x) = \begin{cases} 
			1-	\sin(\pi x) & \alpha_2 - \delta_2 < x \leq \alpha_2, \\
			x(1 - x)+1 & \alpha_2 < x < 2 \\
			-1 & 2 \leq x < \alpha_2/2 + \delta_2 .
		\end{cases}
	\end{array}
	$$
	
	Tables \ref{tab-ex3-2} and \ref{tab-ex3-3} present the error norms and convergence rates for fixed horizons and horizon-to-mesh ratios, respectively. The results confirm the theoretical estimates, demonstrating optimal convergence rates in both the energy and $L^2$ norms.  Figure \ref{fig:eg} (right) shows that the method accurately resolves discontinuities at multiple interfaces, further validating its robustness for problems involving complex interfacial geometries.

\begin{table}[H]  
  \centering  
  \caption{ Errors norms and convergence rates for $\delta_1 = 1/8$, $\delta_2=1/4$ and $\delta_3 = 1/2$}   
  \begin{tabular}{|l|l@{\hspace{8pt}}l|l@{\hspace{8pt}}l|l@{\hspace{8pt}}l|}
   \hline
   \multicolumn{1}{|l|}{} & \multicolumn{2}{l|}{$k=1$} & \multicolumn{2}{l|}{$k=2$} & \multicolumn{2}{l|}{$k=3$} \\[8pt]
   \hline
   $h$ & $||u - u^h||_{\delta}$  & $||u - u^h||_{L^2}$ &   $||u - u^h||_{\delta}$  & $||u - u^h||_{L^2}$ &   $||u - u^h||_{\delta}$  & $||u - u^h||_{L^2}$ \\[8pt]
   \hline
   $2^{-3}$ & 8.00e-02 & 7.50e-03 &  7.00e-03 & 4.61e-04 & 8.99e-05 & 8.39e-06  \\[8pt]
   \hline
   $2^{-4}$ & 2.03e-02[1.98] & 2.00e-03[1.91] & 8.67e-04[3.01]& 5.82e-05[2.99] & 5.50e-06[4.03] &  5.35e-07[3.97] \\[8pt]
   \hline
   $2^{-5}$ & 5.10e-03[1.99] & 5.06e-04[1.98] & 1.08e-04[3.01] & 7.28e-06[3.00]  & 3.39e--07[4.02] & 3.39e-08[3.98]\\[8pt]
   \hline
   $2^{-6}$ & 1.20e-03[2.09]  &   1.39e-04[1.86]& 1.33e-05[3.02]  &  9.04e-07[3.01]  & 2.10e-08[4.01]  &  2.12e-09[4.00] \\[8pt]
   \hline
  \end{tabular}
  \label{tab-ex3-2}
 \end{table}  
 
 \begin{table}[H]  
  \centering  
  \caption{ Errors norms and convergence rates for $\delta_1 = h$, $\delta_2 = 2h$ and $\delta_3 = 3h$}  
  \begin{tabular}{|l|l@{\hspace{8pt}}l|l@{\hspace{8pt}}l|l@{\hspace{8pt}}l|}
   \hline
   \multicolumn{1}{|l|}{} & \multicolumn{2}{l|}{$k=1$} & \multicolumn{2}{l|}{$k=2$} & \multicolumn{2}{l|}{$k=3$} \\[8pt]
   \hline
   $h$ & $||u - u^h||_{\delta}$  & $||u - u^h||_{L^2}$ &   $||u - u^h||_{\delta}$  & $||u - u^h||_{L^2}$ &   $||u - u^h||_{\delta}$  & $||u - u^h||_{L^2}$ \\[8pt]
   \hline
   $2^{-3}$ & 8.02e-02 &7.40e-03& 7.00e-03 & 4.61e-04 & 9.00e-05 & 8.38e-06\\[8pt]
   \hline
   $2^{-4}$ & 2.75e-02[1.54] & 1.90e-03[1.96] & 1.80e-03[1.96] & 5.96e-05[2.95] & 1.12e-05[3.01] & 5.36e-07[3.97]   \\[8pt]
   \hline
   $2^{-5}$ & 9.80e-03[1.49]  &  4.96e-04[1.94] & 4.54e-04[1.99] & 7.89e-06[2.92] & 1.42e-06[2.98] & 3.49e-08[3.94]  \\[8pt]
   \hline
   $2^{-6}$ & 3.50e-03[1.49]  & 1.27e-04[1.97] & 1.14e-04[1.99] &  9.71e-07[3.02] & 1.77e-07[3.00]  & 2.40e-09[3.86] \\[8pt]
   \hline
  \end{tabular}
  \label{tab-ex3-3}
 \end{table}
 \end{example}

	The numerical results validate the theoretical error estimates even for solutions discontinuous across the interface.  In all case, the discrete $L^2$ error converges at an order higher than predicted, indicating that the current theoretical bounds can be improved.  The method's robustness with respect to interfacial jumps underscores its applicability to practical problems involving material discontinuities or sharp phase boundaries.	
	
\section{Conclusion}
This paper has introduced an AC unfitted finite element framework for one-dimensional nonlocal elliptic interface problems. The analysis establishes second-order convergence of the nonlocal solution to the local limit as the horizon parameters tend to zero. By incorporating nonlocal jump conditions directly into the weak formulation via a Nitsche-type approach, the method accommodates solution discontinuities without requiring mesh conformity to the interface. Optimal convergence rates of order $(k+1)$ in both the energy and $L^2$ norms are rigorously proven for piecewise polynomial approximations of degree $k$.

Numerical experiments demonstrate the robustness of the scheme for both continuous and discontinuous interfacial solutions, confirming the theoretical findings.
The proposed asymptotically compatible unfitted framework offers  a practical and efficient  foundation for addressing more complex problems in computational mechanics.  
A natural and critical extension of this work is to higher spatial dimensions, which would significantly broaden its applicability to realistic engineering problems. Future research will also focus on time-dependent nonlocal interface problems, such as dynamic fracture and elastodynamics with evolving micro-cracks. The mathematical analysis and numerical tools developed in this one-dimensional study serve as a cornerstone for these ambitious extensions.

\section{Appendix}	
	This appendix provides detailed proofs of the key lemmas and theorems.
Section~\ref{app:flux_proof} contains the proof of  Lemma~\ref{lemma:flux_limit}, which establishes the local limit of the interface flux operator. 
Section~\ref{app:norm_proof} presents the proofs of  Lemmas~\ref{lem:energy_bound} and~\ref{lem:poincare}, which are essential to the error analysis of the unfitted finite element scheme.
	
	\subsection{Proof of Lemma~\ref{lemma:flux_limit} }\label{app:flux_proof}
	The nonlocal flux operator $\mathcal{F}(\hat{u}_1, \hat{u}_2)$  is analyzed by decomposing its action as follows
	\begin{align*}
		(\mathcal{F}(\hat{u}_1, \hat{u}_2),v)_{\mathcal{I}_2^J} + (\mathcal{F}(\hat{u}_1, \hat{u}_2),v)_{\mathcal{I}_1^J} = \mathcal{I}_1+\mathcal{I}_2+\mathcal{I}_3 + \mathcal{I}_4+\mathcal{I}_5,
	\end{align*}
	where the terms are defined by 
	\begin{align*}
		\mathcal{I}_1=&\int_{\alpha - \delta_2}^{\alpha}\int_{\alpha +
			\delta_1}^{\alpha + \delta_2}\left[\hat{u}_2(x)-\hat{u}_2(y)\right]\gamma_{\delta_2}(x,y)v(x)dydx,  \; 
		\mathcal{I}_2=\frac{1}{2}\int_{\alpha - \delta_2}^{\alpha}\int_{\alpha}^{\alpha + \delta_1}\left[\hat{u}_1(x)-\hat{u}_1(y)\right]\gamma_{\delta_2}(x,y)v(x)dydx,\\[4pt]
		\mathcal{I}_3=&-\frac{1}{2}\int_{\alpha - \delta_2}^{\alpha}\int_{\alpha}^{\alpha + \delta_1}\left[\hat{u}_1(x)-\hat{u}_1(y)\right]\gamma_{\delta_1}(x,y)v(x)dydx, \; 
		\mathcal{I}_4 = \frac{1}{2}\int_{\alpha}^{\alpha + \delta_1}\int_{\alpha - \delta_2}^{\alpha}\left[\hat{u}_2(x)-\hat{u}_2(y)\right]\gamma_{\delta_1}(x,y)v(x)dydx,\\[4pt]
		\mathcal{I}_5=&-\frac{1}{2}\int_{\alpha}^{\alpha + \delta_1}\int_{\alpha- \delta_2}^{\alpha}\left[\hat{u}_2(x)-\hat{u}_2(y)\right]\gamma_{\delta_2}(x,y) v(x)dydx.
	\end{align*}
	
	For $\mathcal{I}_1$, the coordinate transformations 
	$x = \alpha - t\delta_2,$
	$y = \alpha + s\delta_2 $ with  $t \in [0,1], \;s \in [\delta_1/\delta_2,1]$ yield
	\[
		\mathcal{I}_1 
		 = \frac{1}{\delta_2} \int_{t=0}^1 \int_{s=\delta_1/\delta_2}^1 [\hat{u}_2(\alpha-t\delta_2)-\hat{u}_2(\alpha+s\delta_2)] \gamma_2(t+s) v(\alpha-t\delta_2) \, ds \, dt.	\]
		 
	  Taylor expansions about $\alpha$ give
	\begin{equation*}
		\begin{split}
			\hat{u}_2(\alpha-t\delta_2)& = \hat{u}_2(\alpha) - \hat{u}_2'(\alpha)t\delta_2 + \frac{1}{2}\hat{u}_2''(\alpha)t^2\delta_2^2 +\mathcal{O}(\delta_2^3),\\
			\hat{u}_2(\alpha+s\delta_2) &= \hat{u}_2(\alpha) + \hat{u}_2'(\alpha)s\delta_2 + \frac{1}{2}\hat{u}_2''(\alpha)s^2\delta_2^2+\mathcal{O}(\delta_2^3),\\
			v(\alpha-t\delta_2) &= v(\alpha)+ \mathcal{O}(\delta_2).
		\end{split} 
	\end{equation*}
	Substituting into $\mathcal{I}_1 $ leads to
	\[
		\mathcal{I}_1 
		= -\hat{u}_2'(\alpha)v(\alpha)\mathcal{I}_1^1
		+\frac{1}{2}\delta_2\hat{u}_2''(\alpha) v(\alpha)\mathcal{I}_1^2+ O(\delta_2^2),\\
	\]
	where 
\[
\begin{split}
\mathcal{I}_1^1 &=  \int_{t=0}^1 \int_{s=\delta_1/\delta_2}^1 (t+s)\gamma_2(t+s) \, ds \, dt,\qquad
\mathcal{I}_1^2  = \int_{t=0}^1 \int_{s=\delta_1/\delta_2}^1 (t^2-s^2)\gamma_2(t+s) \, ds \, dt.
\end{split}
\]
Similar expressions apply to $\mathcal{I}_2$-$\mathcal{I}_5$.   Combining the terms involving $\gamma_1$-kernel gives
\begin{equation*}
		\begin{split}
			\mathcal{I}_3  + \mathcal{I}_4
			=&\hat{u}_1'(\alpha)v(\alpha)\mathcal{I}_{\sigma_1}^1 +  \frac{1}{2}\delta_1 \hat{u}_1''(\alpha)v(\alpha) \mathcal{I}_{\sigma_1}^2 +O(\delta_1^2),
		\end{split}
	\end{equation*}
	with 
	\[
	\begin{split}
	\mathcal{I}_{\sigma_1}^1 &= \int_{s=0}^{1}\int_{t=0}^{\delta_2/\delta_1}(s + t)\gamma_1(s + t)\,dt\,ds,\qquad
	\mathcal{I}_{\sigma_1}^2 = \int_{t=0}^{1}\int_{z=t}^{t + \delta_2/\delta_1}(2zt - z^{2})\gamma_1(z)dzdt.
	\end{split}
	\]
Similarly, combining the $\gamma_2$-terms yields
	\begin{align*}
		\mathcal{I}_1+\mathcal{I}_2+\mathcal{I}_5 =
		 -\hat{u}_2'(\alpha)v(\alpha)\mathcal{I}_{\sigma_2}^1	 + \frac{1}{2}\delta_2 \hat{u}''_2(\alpha) v(\alpha)\mathcal{I}_{\sigma_2}^2+O(\delta_2^2),
	\end{align*}
where
	\[\mathcal{I}_{\sigma_2}^1 =  \int_0^1 \int_0^1 (t+s)\gamma_2(t+s)\,ds\,dt, \qquad
	\mathcal{I}_{\sigma_2}^2  = \int_{t=0}^1 \int_{z=t}^{t+1} (2zt - z^2)\gamma_2(z)\,dz\,dt.\]
Direct calculation shows that $\mathcal{I}_{\sigma_1}^2=\mathcal{I}_{\sigma_2}^2 =0$.  For $\mathcal{I}^1_{\sigma_1}$,
	\[
	\begin{split}
		\mathcal{I}^1_{\sigma_1} &= \int_{s=0}^{1} \int_{z=s}^{s + \delta_2/\delta_1} z\gamma_1(z) \, dz \, ds,\\
		&= \int_{z=0}^{1} \int_{s=0}^{z} z \gamma_1(z) \, ds \, dz+ \int_{z=1}^{\delta_2/\delta_1} \int_{s=0}^{1} z \gamma_1(z) \, ds \, dz =\int_{0}^{1} z^2 \gamma_1(z) \, dz.
	\end{split}
	\]
	and for $ \mathcal{I}^1_{\sigma_2}$, 	
	\begin{align*}
		\mathcal{I}^1_{\sigma_2} 
		&= \int_{x=0}^{2} \left( \int_{t=\max(0,x-1)}^{\min(1,x)} x \gamma_2(x) \, dt \right) dx \\
		&= \int_{x=0}^{1} x^2 \gamma_2(x) \, dx + \int_{x=1}^{2} x(2 - x) \gamma_2(x) \, dx  = \int_{0}^{1} x^2 \gamma_2(x) \, dx. 
	\end{align*}
	Recalling the moment condition \eqref{kernel_scaling}, it follows that $\mathcal{I}^1_{\sigma_i} = 2\sigma_i $.
	Combining all terms gives the desired result
	\begin{align*}
		\mathcal{F}(\hat{u}_1, \hat{u}_2)=\sigma_1\hat{u}_1^\prime(\alpha)-\sigma_2\hat{u}_2^\prime(\alpha)+\mathcal{O}(\max(\delta_1^2, \delta_2^2)).
	\end{align*}
	This completes the proof of Lemma \ref{lemma:flux_limit}.
	
	\subsection{Proofs of Lemmas \ref{lem:energy_bound} and \ref{lem:poincare}}	\label{app:norm_proof}
	
	\noindent\textbf{Proof of Lemmas \ref{lem:energy_bound}.}
		Each component of the energy norm is estimated separately.  For  \(\lVert v\rVert_{\delta_i}^{2}\), 
		\[
		\begin{split}
			\|v\|_{\delta_i}^2 &\leq \int_{\Omega_i\cup\mathcal{I}_i^D} \int_{\Omega_i\cup\mathcal{I}_i^D} (v_i^2(x)+v_i^2(y)) \gamma_{\delta_i}(x,y) \, dy \, dx
			= 2\int_{\Omega_i\cup\mathcal{I}_i^D} v_i^2(x) \left(\int_{\Omega_i\cup\mathcal{I}_i^D} \gamma_{\delta_i}(x,y) \, dy\right) dx. 
		\end{split}
		\]
Using assumption \eqref{kernel_bound} and the boundedness of  $v_i$ 
		\[
		\begin{split}
			\|v\|_{\delta_i}^2 &\leq 2G(\gamma_{\delta_i})\left(|\Omega_i|\|v_i\|_{\infty, \Omega_i}^2 + |\mathcal{I}_i^D|\|v_i\|_{\infty, \mathcal{I}_i^D}^2\right)
			\lesssim \delta_i^{-2}\left(\|v_i\|_{\infty, \Omega_i}^2 + \|v_i\|_{\infty, \mathcal{I}_i^D}^2\right).
		\end{split}
		\]
The $\Gamma$-norm terms are bounded similarly.  For example		\[
		\begin{split}
			&\frac{1}{2} \int_{\mathcal{I}_2^J} \int_{\mathcal{I}_1^J} (v_1(x)-v_1(y))^2 \gamma_{\delta_1}(x,y) \, dy \, dx
			\lesssim \delta_1^{-2} \left(|\mathcal{I}_2^J|\|v_1\|_{\infty, \mathcal{I}_2^J}^2 + |\mathcal{I}_1^J|\|v_1\|_{\infty, \mathcal{I}_1^J}^2\right).
		\end{split}
		\]
The remaining terms are handled analogously, leading to		\[
		\|v\|_\Gamma^2 \lesssim \max\{\delta_1^{-1}, \delta_2^{-1}\} \left(\|v\|_{\infty, \mathcal{I}_1^J}^2 + \|v\|_{\infty, \mathcal{I}_2^J}^2 \right).
		\]
Combining these estimates yields
		\[
		\|v\|_\delta^2 \lesssim \max\{\delta_1^{-2}, \delta_2^{-2}\} \left(\|v\|_{\infty, \Omega_1\cup\Omega_2}^2 + \|v\|_{\infty, \mathcal{I}_1^D\cup\mathcal{I}_2^D}^2\right).
		\]
		Taking square roots completes the proof.

	\noindent\textbf{Proof of Lemmas \ref{lem:poincare}.}
		For each subdomain $\Omega_i$ ($i=1,2$), the nonlocal Poincaré inequality \cite[Theorem 3.34]{MR3938295} implies
		\begin{equation}\label{eq:poincare}
			\|v_i\|_{L^2(\Omega_i)}^2 \leq C_{p,i}\|v_i\|_{\delta_i}^2, \quad  {\rm where}\;C_{p,i} = \frac{|\Omega_i|^2}{2\sigma_i}.
		\end{equation}
		Since $v = 0$ on ${\mathcal{I}_i^D}$,  it follows that 
				$ \|v\|_{\delta_i}^2 \leq \|v\|_\delta^2. $
		Summing over subdomains 
		\begin{align*}
			\|v\|_{L^2(\Omega_1 \cup \Omega_2)}^2 &= \|v_1\|_{L^2(\Omega_1)}^2 + \|v_2\|_{L^2(\Omega_2)}^2 
			\leq (C_{p,1} + C_{p,2})\|v\|_\delta^2.
		\end{align*}
		The lower bound follows by setting  $C_1 = (C_{p,1} + C_{p,2})^{-1/2}$. For the upper bound   
		\begin{align*}
			\|v\|_{\delta_i}^2
			\leq2\int_{\Omega_i\cup\mathcal{I}_i^D}v_1^2(x)\left(\int_{\Omega_i\cup\mathcal{I}_i^D}\gamma_{\delta_i}(x,y)\,dy\right)dx
			\leq C_{l,i} \delta_i^{-2}\|v\|_{L^2(\Omega_i)}^2.
		\end{align*}
		Similarly, for the $\Gamma$-norm
		  \[\|v\|_{\Gamma}^2\leq  C_{l,3} \max\{\delta_1^{-2},\delta_2^{-2}\} (\|v\|_{L^2(\mathcal{I}_1^J)}^2 + \|v\|_{L^2(
				\mathcal{I}_2^J)}^2).\]
		Combining these results gives 
		\[
		\|v\|_{\delta}\leq C_2 \max\{\delta_1^{-1},\delta_2^{-1}\}\|v\|_{L^2(\Omega_1\cup\Omega_2)},
		\]
		which completes the proof.

\section*{Acknowledgments}
H. Dong was partially supported by the National Natural Science Foundation of China (Grant No. 12001193), the Scientific Research Fund of Hunan Provincial Education Department (Grant No.24A0052), and the Hunan provincial national science foundation of China (No. 2024JJ6298).  Z. Xie was supported by the National Natural Science Foundation of China (Grant No. 12171148, 52331002),  the Major Program of Xiangjiang Laboratory ( Grant No. 22XJ01013).  J. Zhang was partially supported by National Natural Science Foundation of China ( Grant No. 12571438) and the Fundamental Research Funds for the Central Universities (Grant No. 2042021kf0050).

	\bibliographystyle{plain}      

\end{document}